\DeclareMathOperator{\Mat}{\operatorname{M}}
\DeclareMathOperator{\GL}{\operatorname{GL}}
\DeclareMathOperator{\bfB}{\mathbf{B}}
\DeclareMathOperator{\card}{\#\,}
\DeclareMathOperator{\Ker}{\operatorname{Ker}}
\DeclareMathOperator{\Vect}{\operatorname{span}}
\DeclareMathOperator{\im}{\operatorname{Im}}
\DeclareMathOperator{\tr}{\operatorname{tr}}
\DeclareMathOperator{\rk}{\operatorname{rk}}
\DeclareMathOperator{\codim}{\operatorname{codim}}
\renewcommand{\setminus}{\smallsetminus}
\def\F{\mathbb{F}}
\def\K{\mathbb{K}}
\def\R{\mathbb{R}}
\def\calA{\mathcal{A}}
\def\calB{\mathcal{B}}
\def\calH{\mathcal{H}}
\def\calJ{\mathcal{J}}
\def\calK{\mathcal{K}}
\def\calL{\mathcal{L}}
\def\calP{\mathcal{P}}
\def\calR{\mathcal{R}}
\def\calS{\mathcal{S}}
\def\calV{\mathcal{V}}
\def\calW{\mathcal{W}}
\def\calY{\mathcal{Y}}
\def\lcro{\mathopen{[\![}}
\def\rcro{\mathclose{]\!]}}
\theoremstyle{definition}
\newtheorem{Def}{Definition}
\theoremstyle{plain}
\newtheorem{theo}{Theorem}
\newtheorem{prop}[theo]{Proposition}
\newtheorem{cor}[theo]{Corollary}
\newtheorem{lemme}[theo]{Lemma}
\theoremstyle{plain}
\newtheorem{conj}{Conjecture}
\theoremstyle{remark}
\newtheorem{Rems}{Remarks}
\newtheorem{Rem}[Rems]{Remark}
\title{The classification of large spaces of matrices with bounded rank}
\author{Cl\'ement de Seguins Pazzis\footnote{Lyc\'ee Priv\'e Sainte-Genevi\`eve, 2, rue
de l'\'Ecole des Postes, 78029 Versailles Cedex, FRANCE.}
\footnote{e-mail address: dsp.prof@gmail.com}}
\begin{document}

\thispagestyle{plain}

\maketitle

\begin{abstract}
Given an arbitrary (commutative) field $\K$, let $\calV$ be a linear subspace of $\Mat_n(\K)$ consisting of matrices of rank less than
or equal to some $r \in \lcro 1,n-1\rcro$. A theorem of Atkinson and Lloyd states that, if $\dim \calV>n\,r-r+1$ and $\card \K>r$, then either all the matrices of $\calV$ vanish everywhere on some common $(n-r)$-dimensional subspace of $\K^n$, or it is true of the matrices of the transposed space $\calV^T$.
Using a new approach, we prove that the restriction on the cardinality of the underlying field is unnecessary.
We also show that the results of Atkinson and Lloyd on the case
$\dim \calV=n\,r-r+1$ hold for any field, except in the special case when $n=3$, $r=2$ and $\K \simeq \F_2$.
In that exceptional situation, we classify all the exceptional spaces up to equivalence. Similar theorems of Beasley for rectangular matrices are
also extended to all fields.
Finally, we extend Atkinson, Lloyd and Beasley's classification theorems to a range of high dimensions which almost doubles
theirs, under the assumption that $\card \K>r$.
\end{abstract}

\vskip 2mm
\noindent
\emph{AMS Classification:} 15A30, 15A03

\vskip 2mm
\noindent
\emph{Keywords:} matrices, rank, non-singular matrices, linear subspaces, affine subspaces, null space.

\section{Introduction}

\subsection{The problem}

Let $\K$ be a (commutative) field, and $n$ and $p$ be two positive integers, with $n \geq p$ unless specified otherwise. Denote by
$\Mat_{n,p}(\K)$ the space of all $n \times p$ matrices with entries in $\K$, and set
$\Mat_n(\K):=\Mat_{n,n}(\K)$. We denote by $\GL_n(\K)$ the group of invertible matrices of $\Mat_n(\K)$.

Given a positive integer $r$, an \textbf{$\overline{r}$-subspace} of $\Mat_{n,p}(\K)$
is a linear subspace in which all the matrices have rank less than or equal to $r$.
The study of $\overline{r}$-spaces of matrices has a long history dating back to Isaiah Schur, who was the first to discover
that a $\overline{1}$-subspace of matrices either consists of matrices whose images are included in a common $1$-dimensional space,
or consists of matrices whose kernels contain a common hyperplane. A naive generalization to larger values of $r$ fails:
of course, if a linear subspace $\calV$ of $\Mat_{n,p}(\K)$ is such that every matrix of $\calV$ vanishes everywhere on some
$(p-r)$-dimensional subspace of $\K^p$, or the column spaces of all the matrices of $\calV$ are included in a common $r$-dimensional subspace of $\K^n$,
then $\calV$ is an $\overline{r}$-space, but the converse does not hold.

One must be aware that the general classification problem for $\overline{r}$-spaces of matrices, i.e.,
the one of determining all the maximal $\overline{r}$-subspaces of $\Mat_{n,p}(\K)$ up to equivalence, is intractable.
Thus, one has to fall back to more modest aims.
So far, there have been two major kinds of classification results on $\overline{r}$-spaces.
For small values of $r$ (up to $r=3$), or when $n>1+\dfrac{r(r-1)}{2}$, Atkinson \cite{AtkinsonPrim} has obtained a classification of all the
\emph{primitive}\footnote{The notion of a primitive $\overline{r}$-space will play no
    part in our study, so we do not recall its definition.} $\overline{r}$-spaces if the underlying field $\K$
has more than $r$ elements. His results were later rediscovered by Eisenbud and Harris \cite{EisenbudHarris}, who used techniques
from algebraic geometry to reprove them in the special case of algebraically closed fields.

On the other hand, one can seek to understand the structure of maximal $\overline{r}$-subspaces of matrices with large dimension. That problem dates
back to Dieudonn\'e \cite{Dieudonne},
who proved that a linear subspace of $\Mat_n(\K)$ which contains only singular matrices - that is, an $\overline{n-1}$-subspace of $\Mat_n(\K)$ -
has dimension less than or equal to $n^2-n$, and that equality occurs only if
all the matrices of the subspace vanish at some common non-zero vector of $\K^n$ or
all the matrices have their column space included in a common hyperplane of $\K^n$.
This theorem was a major tool in Dieudonn\'e's determination of the automorphisms of the vector space $\Mat_n(\K)$
which preserve non-singularity \cite{Dieudonne}. Dieudonn\'e's result was later generalized by Flanders, who proved that an
$\overline{r}$-subspace of $\Mat_{n,p}(\K)$ must have dimension less than or equal to $n\,r$ (remember the assumption that $n \geq p$),
and classified the cases of equality
(note that equality is obviously attained by the space of all matrices with all last $p-r$ columns zero).
Unlike Dieudonn\'e, Flanders relied upon rudimentary techniques of algebraic geometry,
to the effect that his arguments only apply to fields with more than $r$ elements
(he also excluded fields of characteristic $2$, but that limitation can be easily worked around).
It was only much later \cite{Meshulam} that Roy Meshulam was eventually able to prove that Flanders's theorem holds for all fields. In the meantime, Atkinson and Lloyd had worked on extending the study of large $\overline{r}$-spaces to encompass
dimensions that are close to the maximal one $n\,r$. Among their results, they proved that if an $\overline{r}$-subspace $\calV$
of $\Mat_n(\K)$ has dimension greater than $nr-r+1$, then all the matrices of $\calV$ vanish everywhere on some common $(n-r)$-dimensional
subspace of $\K^n$ or their column spaces are included in a common $r$-dimensional subspace of $\Mat_n(\K)$.
It ensues that, up to equivalence and transposition, there is exactly one maximal $\overline{r}$-subspace of $\K^n$
of dimension greater than $nr-r+1$. Atkinson and Lloyd also classified the $\overline{r}$-spaces of dimension $nr-r+1$
and showed that, up to equivalence and transposition, there is exactly one such maximal $\overline{r}$-subspace of $\Mat_n(\K)$.
Later, Beasley \cite{Beasley} generalized Atkinson and Lloyd's results to spaces of rectangular matrices.

The works of Atkinson, Lloyd and Beasley are based upon Flanders's core ideas, and for this reason
they only address the case of fields with more than $r$ elements. In \cite{Meshulam}, Meshulam brought forth a counter-example which
cast doubt on the potential validity of Atkinson and Lloyd's results for fields with very small cardinality.

Thus, after the mid-1980's, there remained two open problems on the topic of large $\overline{r}$-spaces of square matrices:
\begin{enumerate}[(1)]
\item Obtain classification theorems for dimensions that are smaller than $nr-r+1$;
\item Search whether the Atkinson-Lloyd-Beasley theorems can be extended to all finite fields, minus some
exceptional cases.
\end{enumerate}
Until now, no progress had been made on either one of those problems. The main purpose of this article is to give a complete
solution to the second one. Our short answer is that the Atkinson-Lloyd-Beasley theorems
hold for all fields and all dimensions, with the notable exception of the case when $n=p=3$, $r=2$ and $\K \simeq \F_2$, in which we find
that, up to equivalence, Meshulam's counter-example is the sole exceptional solution.
In addition, we shall give a modest contribution to the first problem by covering a range of dimensions
that is roughly twice as large as the one in the Atkinson-Lloyd-Beasley theorems (though only for fields with more than $r$ elements).

Besides the sheer beauty of the Atkinson-Lloyd-Beasley classification theorems,
a strong motivation for generalizing them to all fields was their potential application to wide generalizations of Dieudonn\'e's
theorem on invertibility preservers. The following three theorems were proved in \cite{largedimpres} as an application of results of the present article:
to understand them, remember that a \textbf{Frobenius automorphism} is a map of the form
$$M \mapsto PMQ \quad \text{or} \quad M \mapsto PM^T Q,$$
where $P$ and $Q$ are non-singular matrices and $M^T$ denotes the transpose of $M$, and that Dieudonn\'e's theorem states that
the Frobenius automorphisms are the only linear maps from $\Mat_n(\K)$ to itself which map the set of invertible matrices onto itself:

\begin{theo}\label{strong}
Let $\calV$ be a linear subspace of $\Mat_n(\K)$ such that $\codim \calV<n-1$.
Let $f : \calV \hookrightarrow \Mat_n(\K)$ be a linear embedding such that
$$\forall M \in \calV, \; f(M) \in \GL_n(\K) \Leftrightarrow M \in \GL_n(\K).$$
Then, $f$ extends to a Frobenius automorphism of $\Mat_n(\K)$ unless
$n=3$, $\codim \calV=1$ and $\K \simeq \F_2$.
\end{theo}

\begin{theo}
Let $\calV$ be a linear subspace of $\Mat_n(\K)$ such that $\codim \calV<n-1$.
Let $f : \calV \rightarrow \calV$ be a linear bijection such that $f\bigl(\calV \cap \GL_n(\K)\bigr) \subset \GL_n(\K)$.
Then, $f$ extends to a Frobenius automorphism of $\Mat_n(\K)$ unless
$n=3$, $\codim \calV=1$ and $\K \simeq \F_2$.
\end{theo}

\begin{theo}
Assume that $\K$ is infinite.
Let $\calV$ be a linear subspace of $\Mat_n(\K)$ such that $\codim \calV<n-1$,
and $f : V \hookrightarrow \Mat_n(\K)$ be a linear embedding such that $f\bigl(\calV \cap \GL_n(\K)\bigr) \subset \GL_n(\K)$.
Then, $f$ extends to a Frobenius automorphism of $\Mat_n(\K)$.
\end{theo}

\subsection{Further notation}

For $M \in \Mat_{n,p}(\K)$,
we denote by $m_{i,j}$ its entry on the $i$-th row and $j$-th column.
The image - or column space - of $M$ is denoted by $\im M$.
The kernel - or null space - of $M$ is denoted by $\Ker M$.
If $M$ is a square matrix, we denote by $\tr(M)$ its trace and by $\widetilde{M}$ the transpose of the matrix of cofactors of $M$, and we recall
the formula $\widetilde{M}=\det(M)\,M^{-1}$ when $M$ is invertible.

We denote by $\frak{sl}_n(\K)$ the subspace of matrices with trace $0$ in $\Mat_n(\K)$, and by
$T_n^+(\K)$ (respectively, by $T_n^-(\K)$) the subspace of upper-triangular matrices (respectively, of lower-triangular matrices).
We equip $\Mat_{n,p}(\K)$ with the non-degenerate symmetric bilinear form
$b : (A,B) \mapsto \tr(A^TB)$. Given a subset $\calA$ of $\Mat_{n,p}(\K)$, its orthogonal subspace with respect to $b$
will always be denoted by $\calA^\bot$ unless specified otherwise.

\noindent We make the group $\GL_n(\K) \times \GL_p(\K)$ act on the set of linear subspaces of $\Mat_{n,p}(\K)$
by
$$(P,Q).\calV:=P\,\calV\,Q^{-1}.$$
Two linear subspaces of the same orbit will be called \textbf{equivalent}
(this means that they represent, in a change of bases, the same set of linear transformations from a $p$-dimensional vector space
to an $n$-dimensional vector space). They will be called \textbf{similar} if $n=p$ and we can take $P=Q$ in the above condition.

Given a non-empty subset $\calV$ of $\Mat_{n,p}(\K)$, we denote by $\rk \calV$ the maximal rank for a matrix in $V$, and call it
the \textbf{rank} of $\calV$.

Given subsets of $\Mat_n(\K)$ and $\Mat_p(\K)$, respectively, we set
$$\calA \vee \calB:=\Biggl\{ \begin{bmatrix}
A & C \\
0 & B
\end{bmatrix} \mid (A,B,C) \in \calA \times \calB \times \Mat_{n,p}(\K)\Biggr\},$$
which is a subset of $\Mat_{n+p}(\K)$.

For $(s,t)\in \lcro 0,n\rcro \times \lcro 0,p\rcro$, we define
$$\calR(s,t):=\biggl\{\begin{bmatrix}
M & N \\
P & [0]_{(n-s)\times (p-t)}
\end{bmatrix} \mid M \in \Mat_{s,t}(\K), \; N \in \Mat_{s,p-t}(\K), P \in \Mat_{n-s,t}(\K)\biggr\},$$
which is a linear subspace of $\Mat_{n,p}(\K)$.
Notice that we understate $n$ and $p$ in this notation; however, no confusion should arise when we use it.

In particular, $\calR(r,0)$ is the set of all matrices with all rows zero starting from the $(r+1)$-th,
and $\calR(0,r)$ is the set of all matrices with all columns zero starting from the $(r+1)$-th,
and hence they are $\overline{r}$-subspaces of $\Mat_{n,p}(\K)$.
More generally, $\calR(s,t)$ is always an $\overline{s+t}$-space.

\subsection{Main results}

Here is our first theorem for $\overline{r}$-subspaces of square matrices, generalizing Atkinson and Lloyd's theorem to all fields:

\begin{theo}\label{square}
Let $\K$ be an arbitrary field, and $n$ be a positive integer. Let $r \in \lcro 1,n-1\rcro$ and
$\calV$ be an $\overline{r}$-subspace of $\Mat_n(\K)$.
\begin{enumerate}[(a)]
\item If $\dim \calV>nr-r+1$, then $\calV$ is equivalent to a linear subspace of $\calR(r,0)$ or $\calR(0,r)$,
i.e.\ either there exists an $r$-dimensional subspace $F$ of $\K^n$ such that $\forall M \in \calV, \; \im M \subset F$,
or there exists an $(n-r)$-dimensional subspace $G$ of $\K^n$ such that $\forall M \in \calV, \; G \subset \Ker M$.
\item If $\dim \calV=nr-r+1$ and $(n,r,\card \K) \neq (3,2,2)$, then either $\calV$ is equivalent to a linear subspace of $\calR(r,0)$ or $\calR(0,r)$,
or $\calV$ is equivalent to $\calR(1,r-1)$ or $\calR(r-1,1)$.
\end{enumerate}
\end{theo}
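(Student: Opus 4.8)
The plan is to argue by induction on $n$ and, for fixed $n$, by a secondary induction on $r$. The case $r=1$ (any $n$) is the classical classification of $n$-dimensional rank-one spaces, and serves as the base of both inductions. So fix $n\ge 3$ and $r\in\lcro 2,n-1\rcro$, assume the theorem for all smaller $n$ and, at the same $n$, for all smaller $r$. First I would reduce to $\rk V=r$: if $\rk V=r'<r$, then $\dim V>nr-r+1\ge nr'-r'+1$ (strictly, because $(n-1)(r-r')>0$), so part (a) at $(n,r')$, available by the secondary induction, shows $V$ is equivalent to a subspace of $\calR(r',0)\subseteq\calR(r,0)$ or of $\calR(0,r')\subseteq\calR(0,r)$ --- settling both (a) and (b). So assume $\rk V=r$, fix $J\in V$ with $\rk J=r$, and, acting by $\GL_n(\K)\times\GL_n(\K)$, normalize $J=\begin{bmatrix}I_r&0\\0&0\end{bmatrix}$; write each $M\in V$ in the induced $2\times2$ block form with blocks $A(M),B(M),C(M),D(M)$. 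Everything now hinges on how strongly the relations $\rk(J+tM)\le r$, $t\in\K$, constrain these blocks.

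When $\card \K>r$ this is the classical Flanders/Atkinson--Lloyd situation: the $r\times r$ minors of $J+tM$ are polynomials of degree $\le r$ in $t$ that cannot vanish simultaneously at more than $r$ scalars, which forces $D(M)=0$ for every $M$; after the normalization $V$ then lies in $\calR(r,r)$, and a short further analysis of the spaces induced on the coordinate subspaces $\K^r$ and $\K^{n-r}$, together with the induction hypothesis, yields the structure. Over an arbitrary field this zero-counting collapses --- and really does fail, since $D(M)=0$ may already break for a two-dimensional $V$ over $\F_2$, which is the ultimate source of a genuine exception. The heart of the proof, and where I would transplant the combinatorial machinery of \cite{affpres}, is thus to replace the polynomial step by a direct rank argument. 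I would aim for a dichotomy: either $\sum_{M\in V}\im M$ has dimension $\le r$ or $\bigcap_{M\in V}\Ker M$ has dimension $\ge n-r$ --- in which case $V$ is already equivalent to a subspace of $\calR(r,0)$ or of $\calR(0,r)$ and we are done --- or else, using only finitely many of the relations $\rk(J+tM)\le r$ and the sheer size of $\dim V$ (and freely passing between $V$ and $V^T$, by the transpose symmetry of the statement), one performs a Flanders-type compression to strictly smaller matrices of rank $\le r$, applies the induction hypothesis there, and tracks the effect on dimensions.

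Granting this, part (a) follows by iterating the compression: if $V$ is of neither obvious type, the induction hypothesis forces $\dim V\le nr-r+1$, contradicting the assumption. For part (b) one reruns the same reduction under $\dim V=nr-r+1$ while keeping track of the cases of equality; tightness pins the intermediate spaces down to the extremal ones at one level lower --- spaces such as $\calR(1,r-2),\calR(r-2,1),\calR(0,r-1),\calR(r-1,0)$ --- and makes the gluing data rigid, so that reassembling gives $V$ equivalent to $\calR(1,r-1)$ or $\calR(r-1,1)$, unless somewhere in the recursion the induction hypothesis is used in its exceptional regime. A dimension count shows that at the critical value the intermediate spaces are, as a rule, supercritical (hence covered by part (a), which has no exception), so this can happen only when the data are themselves $(n,r,\card \K)=(3,2,2)$; and there, $\Mat_3(\F_2)$ being finite, one enumerates the $5$-dimensional rank-$2$ subspaces up to the action of $\GL_3(\F_2)\times\GL_3(\F_2)$ and checks that those of none of the standard types are exhausted by $\calJ_3(\F_2)$, its transpose, and whatever further representatives the list produces --- which is exactly why that triple is excluded.

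The main obstacle is the dichotomy of the second paragraph: without the polynomial zero count one must extract the block-triangular structure, and the subsequent reduction in size, from the largeness of $\dim V$ and only finitely many rank constraints, and in a shape robust enough that the equality analysis of part (b) still goes through. The secondary difficulty, largely clerical, is to confirm that in the equality case the extremal structure survives the gluing and that the only true leakage is the single triple $(3,2,2)$, which then needs its own explicit --- but finite --- classification.
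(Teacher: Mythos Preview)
Your proposal is an outline rather than a proof, and the gap you yourself flag in the last paragraph is exactly the missing ingredient. The ``dichotomy'' --- either $V$ already has a common image or common kernel of the right size, or one can \emph{compress to strictly smaller matrices} and invoke the induction hypothesis on $n$ --- is asserted, not established. Over a small field the relations $\rk(J+tM)\le r$ give you at most $\card\K$ constraints, and you do not indicate any mechanism that would turn these into a reduction of $n$. Without that step the induction on $n$ never gets off the ground, and nothing in \cite{affpres} provides such a compression ready-made.

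The paper does something structurally different and avoids induction on $n$ altogether. After your normalization $J_r\in V$ and block decomposition $M=\begin{bmatrix}K(M)&C(M)\\L(M)&\alpha(M)\end{bmatrix}$, it extracts from the rank constraints the bilinear identity
\[
L(M)\,P^{-1}\,C(N)=0\qquad\text{for all }M\in W,\ N\in H,\ P\in K(V)\cap\GL_r(\K),
\]
where $W=\Ker K$ and $H=\{M\in W:L(M)=0\}$. The crucial tool, replacing the polynomial zero-count, is Proposition~\ref{geninverse}: if a subspace $Y\subset\Mat_r(\K)$ has codimension $<r-1$, then for any nonzero $x$ the vectors $A^{-1}x$, $A\in Y\cap\GL_r(\K)$, span $\K^r$. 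This uses the affine Dieudonn\'e theorem from \cite{affpres}. When $\dim V>nr-r+1$ one has $\dim K(V)>r^2-r+1$, so the proposition forces $L(W)=0$ or $C(H)=0$; up to transposition one may assume $H=0$, whence $V$ has the graph form $\{\,[M\ \varphi(M)]:M\in W\,\}$. The Common Kernel Theorem (Theorem~\ref{comkertheo}) then finishes: it shows $\im\varphi(M)\subset\im M$ for all $M$, and a Representation Lemma (proved by induction on $r$, not on $n$) yields $\varphi(M)=MC$ for a fixed $C$, giving the common kernel.

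For part (b), the paper does not ``track equality through a recursion'' as you suggest; instead, in the borderline case $\dim K(V)=r^2-r+1$ it shows that the nonsingular matrices of $K(V)$ stabilize a fixed subspace of $\K^r$, classifies such $K(V)$ (Lemma~\ref{inversestab}: similar to $H_r$ or $K_r$), and then pins down $V$ directly via Lemma~\ref{lastlemma}, where the $\calJ_3(\F_2)$ exception appears explicitly. Your plan to reach the exceptional triple by ``leakage'' through an inductive equality analysis is therefore not how the exception actually arises, and your inductive scheme gives no indication of how the $\calR(1,r-1)$ structure would be reassembled from smaller pieces.
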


Notice that $\dim \calR(1,r-1)=\dim \calR(r-1,1)=n\,r-r+1$, whence the additional new cases when $\dim \calV=nr-r+1$.

In the special case when $n=3$, $r=2$ and $\K \simeq \F_2$, the following counter-example was brought forth in an article
of Meshulam \cite{Meshulam}:
the linear subspace
$$\calJ_3(\F_2):=T_3^-(\F_2) \cap \frak{sl}_3(\F_2)=\Biggl\{
\begin{bmatrix}
a & 0 & 0 \\
c & b & 0 \\
d & e & a+b
\end{bmatrix} \mid (a,b,c,d,e)\in \F_2^5\Biggr\}$$
of $\Mat_3(\F_2)$ has rank $2$, dimension $5$ but it is an easy exercise
to prove that it is neither equivalent to $\calR(1,1)$
nor to a linear subspace of $\calR(2,0)$ or $\calR(0,2)$:
notice that, given some $x \in \F_2^3 \setminus \{0\}$, the linear subspace $\calJ_3(\F_2)x$ can have any dimension between $1$ and $3$
(but never $0$), depending on $x$. As we shall see, this counter-example is exceptional. We will prove the following theorem indeed:

\begin{theo}\label{M3F2}
Let $\calV$ be a $\overline{2}$-subspace of $\Mat_3(\F_2)$ with dimension $5$. Then:
\begin{enumerate}[(i)]
\item Either $\calV$ is equivalent to a linear subspace of $\calR(0,2)$ or $\calR(2,0)$;
\item Or $\calV$ is equivalent to $\calR(1,1)$;
\item Or $\calV$ is equivalent to $\calJ_3(\F_2)$.
\end{enumerate}
\end{theo}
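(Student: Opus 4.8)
The plan is to rerun the proof of Theorem~\ref{square}(b) in the single forbidden case $(n,r,\card\K)=(3,2,2)$, keeping careful track of the one step where the hypothesis $\card\K>r$ was used. Here $nr-r+1=5=\dim V$, so we are exactly on the boundary treated in part~(b); for these parameters $\calR(1,r-1)=\calR(r-1,1)=\calR(1,1)$, and one notes that $\calR(1,1)$ is equal to its own transpose, that $\calJ_3(\F_2)$ is equivalent to its transpose (conjugate by the order-reversing permutation matrix and relabel the diagonal), and that $\calR(2,0)^T=\calR(0,2)$. The list of spaces in the statement is thus stable under transposition, so we may argue symmetrically in $V$ and $V^T$. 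As a preliminary, $V$ contains a rank-$2$ matrix, since a linear subspace of rank $\le 1$ has dimension at most $\max(n,p)=3$; and after acting with $\GL_3(\F_2)\times\GL_3(\F_2)$ we may assume that $V$ is not equivalent to a subspace of $\calR(2,0)$ or of $\calR(0,2)$, the goal being to show that $V$ is then equivalent to $\calR(1,1)$ or to $\calJ_3(\F_2)$.

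The central object is, for a nonzero $x\in\F_2^3$, the subspace $Vx:=\{Mx\mid M\in V\}$ of $\F_2^3$, and its transposed analogue $V^T y$. If $Vx=\{0\}$ for some nonzero $x$, then $x$ is a common kernel vector, and sending $x$ to $e_3$ puts $V$ inside $\calR(0,2)$---excluded; dually $V^T y\neq\{0\}$ for every nonzero $y$. Over a field with more than $r=2$ elements the Atkinson--Lloyd argument shows that, under our assumptions, some nonzero $x$ satisfies $\dim Vx=1$, and that this $1$-dimensionality propagates so as to force the $\calR(1,r-1)$ normal form, leaving no room for an intermediate configuration; over $\F_2$ an intermediate configuration, in which $\dim Vx$ runs through all of $\{1,2,3\}$, does slip through, and it is exactly $\calJ_3(\F_2)$. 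Accordingly I would split into: \textbf{(i)} some nonzero $x$ has $\dim Vx=1$, or (after transposing $V$) some nonzero $y$ has $\dim V^T y=1$; and \textbf{(ii)} $\dim Vx\ge 2$ and $\dim V^T y\ge 2$ for all nonzero $x$ and $y$.

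In case~(i), normalise so that both $x$ and the line $Vx$ equal $\F_2 e_1$: every $M\in V$ then has first column in $\F_2 e_1$, so $V$ embeds into the $7$-dimensional space of such matrices, and writing $M=\begin{bmatrix} a & B\\ 0 & A\end{bmatrix}$ with $A\in\Mat_2(\F_2)$ one studies the induced subspace $W:=\{A\mid M\in V\}$ and the kernel of $M\mapsto A$. Applying the rank-$\le 2$ condition to $M$ and to the sums $M+M_0$ (now only over the two scalars of $\F_2$) and counting dimensions pins down $W$ and the possible extensions, and reconstructing $V$ yields, up to equivalence, either $\calR(1,1)$, or a subspace of $\calR(2,0)$ or $\calR(0,2)$ (excluded), or---when the extension is suitably ``twisted''---the space $\calJ_3(\F_2)$. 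In case~(ii) I would derive a contradiction with $\dim V=5$, exploiting the very small size of $\Mat_3(\F_2)$ (only $7$ lines in $\F_2^3$, and $\lvert\GL_3(\F_2)\rvert=168$): starting from a rank-$2$ element in the form $\operatorname{diag}(1,1,0)$, enumerate the ways a rank-$\le 2$ subspace that extends it can keep every $\dim Vx$ and every $\dim V^T y$ at least $2$, and show that its dimension can then never reach $5$. Finally, to see that the two surviving normal forms really are inequivalent, to each other and to every subspace of $\calR(2,0)$ or $\calR(0,2)$, compute the complete invariant furnished by the pair of multisets $\{\dim Vx\}$ and $\{\dim V^T y\}$ ($x,y$ ranging over nonzero vectors): it contains a $0$ precisely for the $\calR(\cdot,0)$-type spaces, equals $\{1,1,1,3,3,3,3\}$ for $\calR(1,1)$, and equals $\{1,2,2,3,3,3,3\}$ for $\calJ_3(\F_2)$.

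The chief obstacle is case~(ii), together with the case analysis inside~(i): over $\F_2$ the polynomial and genericity devices that drive the proofs of Flanders and of Atkinson--Lloyd are unavailable, so the delicate part is to verify that the finite bookkeeping over this one small field actually closes up---that no further $5$-dimensional rank-$2$ subspace of $\Mat_3(\F_2)$ escapes the list, and that $\calJ_3(\F_2)$ accounts for the single new equivalence class.
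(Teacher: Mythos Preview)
Your route is genuinely different from the paper's, and your strategic dichotomy is sound: once you exclude common kernel/row vectors, the surviving five-dimensional spaces all have some $x$ with $\dim Vx=1$ (or the transposed statement), and your invariant multisets at the end are correctly computed and do separate the three classes. But as written this is a plan, not a proof: the two load-bearing steps---the case analysis inside (i) and the exclusion of (ii)---are announced rather than carried out, and over $\F_2$ the ``finite bookkeeping'' you allude to is precisely where the work lies. In (i), after normalising to $Ve_1=\F_2 e_1$ you still have a $7$-dimensional ambient space and must classify the $5$-dimensional rank-$\le 2$ subspaces inside it; this is not short, and is where $\calJ_3(\F_2)$ appears among several non-equivalent configurations that have to be matched one by one. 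In (ii), ``enumerate and show the dimension cannot reach $5$'' is not yet an argument: you should either exhibit a clean obstruction (for instance, the condition $\dim Vx\ge 2$ for all nonzero $x$ forces at least $2\cdot 7=14$ incidences among the $31$ nonzero elements of $V$, and combine this with $\rk V\le 2$), or admit that this step is a brute check.

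For comparison, the paper does not argue on $\dim Vx$ at all. It first invokes the reduction of Section~\ref{reductionII} (valid even for $(n,r,\card\K)=(3,2,2)$) to conclude that either $V$ is already equivalent to $\calR(1,1)$ or to $\calJ_3(\F_2)$, or, after possibly transposing, $V$ has the Common-Kernel shape $V=\{[M\ \varphi(M)]:M\in W\}$ with $W\subset\Mat_{3,2}(\F_2)$ of dimension~$5$; it then proves the dedicated Proposition~\ref{noncomkerM3F2} by looking at the unique nonzero matrix $B\in W^\bot$ and splitting on $\rk B\in\{1,2\}$, with an explicit determinant/cofactor computation in the $\rk B=2$ branch. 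The virtue of the paper's route is that all the heavy lifting (Proposition~\ref{geninverse}, Lemmas~\ref{inversestab} and~\ref{lastlemma}) has already been done for the general theorems, so the $\F_2$ case is a short tail. Your approach is more self-contained and conceptually pleasant---the multiset of $\dim Vx$ is exactly the right equivalence invariant here---but to make it a proof you need to execute the two case analyses rather than defer them.
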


\vskip 3mm
\noindent With the same techniques, we
shall also establish the following theorem for spaces of rectangular matrices,
already proved by Beasley \cite{Beasley} in the case $\card \K>r$.

\begin{theo}\label{rectangular}
Let $\K$ be an arbitrary field, and $n$ and $p$ be positive integers with $n>p$. Let $r \in \lcro 1,p-1\rcro$
and $\calV$ be an $\overline{r}$-subspace of $\Mat_{n,p}(\K)$.
\begin{enumerate}[(a)]
\item If $\dim \calV>nr-r+1+p-n$, then $\calV$ is equivalent to a linear subspace of $\calR(0,r)$.
\item If $\dim \calV=nr-r+1+p-n$, then either $\calV$ is equivalent to a linear subspace of $\calR(0,r)$,
or it is equivalent to $\calR(1,r-1)$, or it is equivalent to $\calR(r,0)$ and then $n=p+1$ or $r=1$.
\end{enumerate}
\end{theo}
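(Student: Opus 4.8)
The plan is to reduce Theorem~\ref{rectangular} to the square case (Theorem~\ref{square}) by a padding-and-restriction argument that exploits the fact that $n>p$ forces extra rows on every matrix of $V$. Concretely, given $V \subset \Mat_{n,p}(\K)$ with $\rk V \le r$ and $\dim V \ge nr-r+1+p-n$, I would first pass to the transposed space $V^T \subset \Mat_{p,n}(\K)$, which has the same rank and dimension; in transposed terms the conclusion we want becomes ``$V^T$ is equivalent to a subspace of $\calR(r,0)$'', i.e.\ the columns of $\K^p$ worth of image vectors of the $M^T$ all lie in a fixed $r$-dimensional space. The natural move is then to embed $\Mat_{p,n}(\K)$ into $\Mat_n(\K)$ by adjoining $n-p$ zero rows at the bottom, obtaining a linear subspace $\widetilde{V} \subset \Mat_n(\K)$ with $\rk \widetilde V = \rk V \le r$ and $\dim \widetilde V = \dim V \ge nr-r+1+(p-n)$. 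Since $p-n<0$, this is \emph{smaller} than the square threshold $nr-r+1$, so Theorem~\ref{square} does not apply directly to $\widetilde V$; to fix this I would instead add suitably many rows of ``free'' matrices, i.e.\ replace $\widetilde V$ by $\widetilde V \oplus W$ where $W$ consists of matrices supported on the last $n-p$ rows and chosen inside a fixed $r$-dimensional column space so as not to increase the rank beyond $r$, bumping the dimension up to exactly (or past) $nr-r+1$ while keeping $\rk \le r$.

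With that reduction in hand, the key steps are: (1) choose $W \subset \calR(r,0) \cap \{\text{matrices supported on rows } p+1,\dots,n\}$ of dimension exactly $(n-p)r$ (so that $\dim(\widetilde V \oplus W) \ge nr-r+1$), and check the sum is direct and has rank $\le r$ — here one must verify that no matrix of $\widetilde V$ combines with one of $W$ to exceed rank $r$, which holds because the images land in complementary row-blocks and the $W$-part has image inside a fixed $r$-space, while a short rank computation handles the cross terms; (2) apply Theorem~\ref{square}(a) or (b) to $U:=\widetilde V \oplus W$ to conclude $U$ is equivalent to a subspace of $\calR(r,0)$, of $\calR(0,r)$, or to $\calR(1,r-1)$ or $\calR(r-1,1)$; (3) translate each possibility back: if $U$ has all images in a fixed $r$-dimensional $F \subset \K^n$, then so do all matrices of $\widetilde V$, hence $V^T$ has a common $r$-dimensional image space and $V$ is equivalent to a subspace of $\calR(0,r)$, which is conclusion (a); the remaining cases ($U \subset \calR(0,r)$, or $U \cong \calR(1,r-1)$, etc.) must be shown to force, after restriction to the first $p$ rows and accounting for the chosen $W$, exactly one of the listed alternatives in Theorem~\ref{rectangular}.

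The main obstacle is step~(3): the classification of $U$ is only ``up to equivalence'', so one has to be careful that the equivalence transformation $U \mapsto PUQ^{-1}$ interacts well with the distinguished row-block decomposition of $U = \widetilde V \oplus W$. In particular, for part~(b) I expect the delicate point to be ruling out, or precisely characterizing, the boundary cases: showing that when $\dim V = nr-r+1+p-n$ the option ``$V$ equivalent to $\calR(r,0)$'' can only occur when $n=p+1$ or $r=1$. This should come from a dimension count: $\dim \calR(r,0)$ in $\Mat_{n,p}(\K)$ is $r n - r(p-r)=rn-rp+r^2$ (wait — one recomputes carefully with the rectangular convention), and matching this against $nr-r+1+p-n$ yields a Diophantine constraint forcing $p-n \ge -1$ unless $r=1$. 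A secondary subtlety is that Theorem~\ref{square}(b) excludes $(n,r,\card\K)=(3,2,2)$; I would need to check that the padding construction never lands us in that excluded configuration when $n>p$, which it does not since $n>p\ge r+1 \ge 3$ already gives $n\ge 4$. Finally, one should double-check the edge cases $r=1$ and $p=r+1$ directly, since there the padded space $U$ may be forced into $\calR(1,r-1)$ or $\calR(r-1,1)=\calR(r-1,1)$ and the back-translation to $V$ needs the explicit normal forms rather than just the equivalence class.
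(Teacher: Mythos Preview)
Your reduction strategy has a genuine gap at step~(1): the claim that $U:=\widetilde V \oplus W$ has rank at most $r$ is false in general, and no ``short rank computation'' will rescue it. Take $n=3$, $p=2$, $r=1$, and $V=\calR(0,1)\subset\Mat_{3,2}(\K)$, so that $\dim V=3>nr-r+1+p-n=2$ and part~(a) applies. Then $\widetilde V$ consists of all matrices in $\Mat_3(\K)$ with an arbitrary first row and zero second and third rows. Any nonzero one-dimensional $W$ supported on the third row --- say $W$ spanned by the elementary matrix $E_{3,1}$ --- yields $E_{1,2}+E_{3,1}\in U$, which has rank~$2>r$; and no cleverer choice of $W$ helps, since for $\rk U\le r$ one needs the row space of every matrix of $W$ to lie in the row space of \emph{every} matrix of $\widetilde V$, and the intersection of those row spaces is $\{0\}$ here. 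In general that intersection is nonzero of the required size precisely when $V$ already has a common kernel of dimension $p-r$, which is the conclusion you are trying to establish --- so the construction is circular. (There is also a notational slip: $\calR(r,0)\cap\{\text{matrices supported on rows }p{+}1,\dots,n\}=\{0\}$ since $r<p$; presumably you meant $\calR(0,r)$, but the rank failure persists either way.)

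The paper does not reduce to the square case; it proves Theorem~\ref{rectangular} in parallel with Theorem~\ref{square} by applying the same block-decomposition machinery directly to rectangular $V$. After normalizing so that $J_r\in V$, one derives the bilinear identity $L(M)\,P^{-1}\,C(N)=0$ for all $M\in W$, $N\in H$, $P\in K(V)\cap\GL_r(\K)$. The hypothesis $n>p$ enters through the estimate $\dim V\le nr-(n-p)q-r+1$ (with $q=\dim\sum_{B\in C(H)}\im B$), which forces $q=0$ in the strict case --- hence $H=\{0\}$ and the Common Kernel Theorem finishes --- and $q\le 1$ in the equality case. For $q=1$ one invokes Lemmas~\ref{inversestab} and~\ref{lastlemma} to pin $K(V)$ down to $K_r$ and conclude $V\simeq\calR(1,r-1)$; the separate subcase $L(H')=\{0\}$ gives $V\simeq\calR(r,0)$, and the constraint $r=1$ or $n=p+1$ then drops out of the dimension identity $\dim\calR(r,0)=rp$ together with $rp=nr-r+1+p-n\iff (r-1)(n-p-1)=0$ (your garbled computation of $\dim\calR(r,0)$ was heading toward this).
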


\noindent Applying the transposition shows that the previous theorems encompass $\Mat_{n,p}(\K)$ for every pair $(n,p)$ of positive integers.

\vskip 2mm
In order to prove the above theorems, it is necessary to avoid using polynomials of large degrees, unlike Flanders, Atkinson, Lloyd and Beasley.
This basically forces us to use only very elementary tools of linear algebra, such as the rank theorem,
gaussian elimination, the factorization lemma for linear mappings, and elementary block matrix computations.
The term ``matrix combinatorics" is probably a good way to describe that sort of technique.

\paragraph{}
Among the various lemmas involved in solving the above problems, some are genuinely interesting for their own sake,
so we shall highlight them here:

\begin{lemme}[Inverse transitivity lemma]\label{geninverse}
Let $\calV$ be an affine subspace of $\Mat_n(\K)$ such that $\codim \calV<n-1$.
Then, for every $x \in \K^n \setminus \{0\}$,
$$\Vect\{A^{-1}x \mid A \in \calV \cap \GL_n(\K)\}=\K^n$$
and hence
$$\Vect\{\widetilde{A}x \mid A \in \calV\}=\K^n.$$
\end{lemme}
\noindent Note how this strengthens the part of Dieudonn\'e's theorem that states that $\calV \cap \GL_n(\K)$ is non-empty.

\begin{lemme}[Representation lemma]\label{reprtheorem}
Let $n$, $p$ and $r$ be positive integers.  Let $\calV$ be a linear subspace of
$\Mat_{n,r}(\K)$ such that $\dim \calV \geq n\,r-n+2$. Let
$\varphi : \calV \rightarrow \Mat_{n,p}(\K)$ be a linear map such that
$\im \varphi(M) \subset \im M$ for every $M \in \calV$. \\
Then, there exists $C \in \Mat_{r,p}(\K)$ such that $\varphi(M)=MC$ for every $M \in \calV$.
\end{lemme}

Besides being a major key both in our proofs of the above theorems and in the proof of Theorem \ref{strong}
from \cite{largedimpres}, Lemma \ref{reprtheorem} is particularly interesting for its connection with the currently
fashionable topic of algebraic reflexivity. Remember that, given vector spaces $U$ and $V$, a vector space $\calS$ of linear operators from $U$ to $V$
is called \textbf{algebraically reflexive} when, for every linear map $f : U \rightarrow V$, the condition
$\forall x \in U, \; f(x) \in \calS x$ implies $f \in \calS$.
The above representation lemma can be turned into a theorem giving a sufficient condition for algebraic reflexivity:

\begin{theo}\label{algrefl}
Let $U$ and $V$ be finite-dimensional vector spaces, and $\calS$ be a linear subspace of $\calL(U,V)$.
Set $U_0:=\underset{f \in \calS}{\bigcap} \Ker f$. If $\dim U-\dim U_0 \geq \dim \calS \dim V-\dim V+2$,
then $\calS$ is algebraically reflexive.
\end{theo}

After writing this article, we found out that the lower bound $\dim \calS \dim V-\dim V+2$ in Theorem \ref{algrefl}
is not optimal. On one hand, every $1$-dimensional space of operators is algebraically reflexive.
On the other hand, when $\dim \calS>1$, the optimal lower bound on $\dim U-\dim U_0$ happens to be $\dim \calS \dim V-2\dim V+3$
if $\K$ has more than three elements, and $\dim \calS \dim V-2\dim V+4$ otherwise.
We shall not include a proof of those statements because they are both very different and far longer than
the one of Theorem \ref{algrefl}.

\vskip 4mm
In the last section of the paper, we shall prove the following theorem, which roughly doubles the
range of high dimensions for which the structure of $\overline{r}$-spaces is known,
with a restriction on the cardinality of the underlying field, however. 

\begin{theo}[Second classification theorem]\label{class2theorem}
Let $n$, $p$ and $r$ be positive integers with $n\geq p >r$.
Let $\calV$ be an $\overline{r}$-subspace of $\Mat_{n,p}(\K)$ such that $\dim \calV \geq nr-2r+4+2(p-n)$.
Assume that $\# \K>r$.
Then, $\calV$ is equivalent to a subspace of one of the spaces
$\calR(0,r)$, $\calR(r,0)$, $\calR(1,r-1)$, $\calR(r-1,1)$, $\calR(2,r-2)$ or $\calR(r-2,2)$. \\
\end{theo}

Note that $\calR(2,r) \subset \Mat_{n,p}(\K)$ has dimension $nr-2r+4+2(p-n)$.
For small finite fields, we suspect that many exceptional cases should arise between dimensions $nr-2r+4+2(p-n)$ and $nr-r+1+(p-n)$.

We finish by stating what should be the ultimate conjecture on the structure of large $\overline{r}$-spaces of square matrices:

\begin{conj}\label{ultimateconjecture}
Let $n$ and $r$ be positive integers with $n>r$.
Let $\calV$ be an $\overline{r}$-subspace of $\Mat_n(\K)$ such that $\dim \calV \geq nr-\lfloor \frac{r}{2} \rfloor\bigl(r-
\lfloor \frac{r}{2} \rfloor\bigr)$.
Then, $\calV$ is equivalent to a subspace of $\calR(s,r-s)$ for some $s \in \lcro 0,r\rcro$.
\end{conj}

The lower bound $nr-\lfloor \frac{r}{2} \rfloor\bigl(r-\lfloor \frac{r}{2} \rfloor\bigr)$
is the minimal dimension for a subspace of $\Mat_n(\K)$ of type $\calR(s,r-s)$ with $s \in \lcro 0,r\rcro$.

\subsection{Structure of the article}
Our proof of Theorems \ref{square} and \ref{rectangular} has two major steps. We will show that, in most cases, we can
use a transposition and right and left-multiplication by non-singular matrices to reduce $\calV \subset \Mat_{n,p}(\K)$ to the form
$\biggl\{\begin{bmatrix}
M & \varphi(M)
\end{bmatrix} \mid M \in \calW\biggr\}$, where $\calW$ is a linear subspace of $\Mat_{n,r}(\K)$ and
$\varphi$ is a linear map. The second step is to prove, with the assumption $\dim \calW \geq nr-n+2$,
that the matrices of $\calV$ vanish on some common linear subspace of dimension $p-r$:
we will coin this as the Common Kernel Theorem.
The first step will use some recent new ideas for proving Flanders's theorem (see \cite{affpres}).
The Common Kernel Theorem is completely independent from the first step and uses the affine version of Flanders's theorem (again, see
\cite{affpres}): since its proof involves no discussion of special cases, we shall start with it
(see Section \ref{commonkersection}, which features the proof of the representation lemma and the derivation of Theorem \ref{algrefl}),
then work on the reduction to the situation of the Common Kernel Theorem, both for square matrices
and rectangular matrices (Sections \ref{reductionI} and \ref{reductionII}). The case $\dim \calV=n\,r-r+1+p-n$ is a lot more involving
than the case of sharp inequality (although it is based on the same core ideas) so we will devote
the entire Section \ref{reductionII} to its study.
In Section \ref{M3F2section}, we will classify the $5$-dimensional $\overline{2}$-subspaces of $\Mat_3(\F_2)$
(there, we will use various results from the previous sections).

The final section is devoted to the proof of the second classification theorem (Theorem \ref{class2theorem}).
This part is largely independent from the rest, save for the use of Lemma \ref{geninverse} and of basic matrix identities that are obtained in Section \ref{start}.

\section{The Common Kernel Theorem}\label{commonkersection}

\subsection{Statement of the theorem, and the structure of its proof}

This section is devoted to the proof of the following theorem, which is a major tool for establishing Theorems \ref{square} and \ref{rectangular}
but is also quite interesting in itself.

\begin{theo}[Common Kernel Theorem]\label{comkertheo}
Let $n$, $p$ and $r$ be three positive integers with $n>r$ and $p>r$. Let $\calW$ be a linear subspace of
$\Mat_{n,r}(\K)$ such that $\dim \calW \geq n\,r-n+2$. Let
$\varphi : \calW \rightarrow \Mat_{n,p-r}(\K)$ be a linear map.
Assume that
$$\calV:=\Bigl\{\begin{bmatrix}
M & \varphi(M)
\end{bmatrix}\mid M \in \calW\Bigr\} \subset \Mat_{n,p}(\K)$$
is an $\overline{r}$-space, and that $(n,r,\card \K) \neq (3,2,2)$ or $\dim \calW>n\,r-n+2$.
Then, there exists a $(p-r)$-dimensional linear subspace $G$ of $\K^p$ such that $G \subset \Ker N$ for every $N \in \calV$, i.e.\
$\calV$ is equivalent to a linear subspace of $\calR(0,r)$.
\end{theo}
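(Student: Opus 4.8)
The plan is to analyze the structure of $V$ via the first columns. Write each matrix of $V$ as $\begin{bmatrix} M & \varphi(M)\end{bmatrix}$ with $M \in W \subseteq \Mat_{n,r}(\K)$. Since $\dim W \geq nr-n+2 = r\cdot n - (n-r)$ and $W \subseteq \Mat_{n,r}(\K)$ which has dimension $nr$, the codimension of $W$ is at most $n-2$; in particular $\rk W = r$ (indeed a proper subspace of codimension $\leq n-2$ in $\Mat_{n,r}(\K)$ cannot have rank $<r$, since the maximal-rank locus is dense enough — more carefully, one invokes the Flanders/Meshulam bound: a rank $\leq r-1$ subspace of $\Mat_{n,r}(\K)$ has dimension at most $(r-1)n < nr-n+2$ once $n \geq 2$). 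So there is $M_0 \in W$ of rank exactly $r$, and after right-multiplying $V$ by a fixed element of $\GL_r(\K)$ (acting only on the first block of columns, which is harmless) and left-multiplying by an element of $\GL_n(\K)$, we may assume $M_0 = \begin{bmatrix} I_r \\ 0\end{bmatrix}$.

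Next I would exploit $\rk V \leq r$ together with the presence of $M_0$. For any $M \in W$ and any scalar $t$, the matrix $\begin{bmatrix} M_0 + tM & t\,\varphi(M)\end{bmatrix}$ lies in $V$ and has rank $\leq r$; since its first block $M_0+tM$ has rank $r$ for all but finitely many $t$ (or, when $\K$ is small, at least for $t=0$ and we argue by a local/affine analysis), the last $p-r$ columns must lie in the column space of the first $r$ columns. Writing $M = \begin{bmatrix} A \\ C\end{bmatrix}$ with $A \in \Mat_r(\K)$, $C \in \Mat_{n-r,r}(\K)$, and $\varphi(M) = \begin{bmatrix} B \\ D\end{bmatrix}$ accordingly, the rank condition on $\begin{bmatrix} M_0 & \varphi(M)\end{bmatrix} = \begin{bmatrix} I_r & B \\ 0 & D\end{bmatrix}$ forces $D = 0$; more generally, playing $M_0 + tM$ against its image block forces, via a Schur-complement computation, that $C\cdot(\text{stuff}) = 0$ and ultimately that the bottom-right $(n-r)\times(p-r)$ block of every element of $V$ vanishes and that the bottom-left blocks $C$ and the induced lower image blocks are tightly constrained. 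The upshot I am aiming for is that $V$ is ``built over'' its top $r$ rows in a controlled way, reducing the common-kernel question for $V$ to a common-kernel question for a subspace $V'$ of $\Mat_{n,p-r+?}$ of smaller size — here is where I would set up an induction on $n$ (or on $p-r$), with the hypothesis $\dim W \geq nr-n+2$ being exactly what propagates through the induction.

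The genuine engine of the proof, and the step I expect to be the main obstacle, is the passage from ``$\rk V \leq r$'' to a statement about a single common kernel rather than finitely many kernels for various pencils. This is precisely where the affine version of the Flanders theorem from \cite{affpres} must be brought in: for a fixed $M \in W$, the affine space $\{M_0 + tM : t \in \K\}$ together with its $\varphi$-image forms a rank $\leq r$ affine subspace of $\Mat_{n,p}(\K)$ containing a rank-$r$ point, and the affine Flanders theorem pins down its structure over \emph{any} field, dodging the polynomial-root arguments that fail over $\F_2$. I would apply this uniformly over a spanning family of $M$'s and then patch the resulting $(p-r)$-dimensional kernels together; the patching is where the dimension hypothesis $\dim W \geq nr-n+2$ and the intersection-of-kernels bookkeeping are used to force a \emph{single} common $G$. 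The stated exclusion $(n,r,\card\K) \neq (3,2,2)$ (or strict inequality in the dimension) is needed exactly at the base/degenerate case of this patching, where $\calJ_3(\F_2)$ shows the conclusion can genuinely fail; in the inductive step one checks this bad configuration never reappears once $n \geq 4$ or $\dim W$ exceeds the threshold, because then the relevant sub-subspace is strictly above its own Atkinson--Lloyd bound.
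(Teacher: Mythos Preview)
Your approach has a genuine gap at its central step. The pencil argument you sketch---looking at $M_0 + tM$ for varying $t$ and using that the first block is invertible for all but finitely many $t$---is precisely the classical Flanders-style argument that requires $\card \K > r$ and breaks down over small fields; over $\F_2$, for instance, there may be only the single value $t=0$ for which $M_0 + tM$ has rank $r$, and that tells you nothing about $\varphi(M)$. Your parenthetical ``local/affine analysis'' does not repair this, and applying the affine Flanders theorem to a one-dimensional affine line is vacuous: a rank-$\leq r$ line in $\Mat_{n,p}(\K)$ carries no structural constraint whatsoever. The subsequent ``patching of kernels'' is never made precise, and in fact there is no reason the individual kernels you would obtain from different pencils should intersect in a $(p-r)$-dimensional space.

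The paper's proof proceeds quite differently, in two independent steps, neither of which uses pencils. First (Proposition~\ref{comstep1}) it shows $\im \varphi(M) \subset \im M$ for \emph{every} $M \in W$: for each hyperplane $\calH \subset \K^n$, the subspace $\{M \in W : \im M \subset \calH\}$ has dimension $\geq (n-1)(r-1)+1$ by the rank theorem, and Corollary~\ref{corgen} (a consequence of the affine Flanders theorem, applied to this \emph{large} subspace of $\Mat_{n-1,r}(\K)$, not to a line) says it is spanned by its rank-$r$ matrices; for those, the rank condition on $\begin{bmatrix} M & \varphi(M)\end{bmatrix}$ directly forces $\im \varphi(M) \subset \calH$. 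Intersecting over all hyperplanes containing $\im M$ gives the claim. The exceptional triple $(n,r,\card\K)=(3,2,2)$ enters exactly here, because Corollary~\ref{corgen} needs $(n-1,r,\card\K) \neq (2,2,2)$. Second, a separate Representation Lemma (Theorem~\ref{reprtheorem}, proved by induction on $r$ with a carefully constructed basis of $\K^n$) shows that any linear $\varphi$ satisfying $\im \varphi(M) \subset \im M$ on a subspace of codimension $\leq n-2$ must have the form $\varphi(M) = MC$ for a single fixed $C \in \Mat_{r,p-r}(\K)$. Once that is established, the common kernel is simply $\im\begin{bmatrix} C \\ -I_{p-r}\end{bmatrix}$, so no patching of distinct kernels is needed at all.
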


\begin{Rems}
\begin{enumerate}[(a)]
\item Note that $\calJ_3(\F_2)$ is a counter-example in the case when $(n,r,\card \K)=(3,2,2)$ and $\dim \calW=5$.
\item Notice also that the lower bound $n\,r-n+2$ on $\dim \calW$ is tight if $r \geq 2$.
Consider indeed the linear subspace $\calW:=\calR(1,r-1) \subset \Mat_{n,r}(\K)$, which has dimension $n\,r-n+1$, and define, for $M \in \calW$,
$$\varphi(M):=\begin{bmatrix}
m_{2,1} & [0]_{1 \times (p-r-1)} \\
[0]_{(n-1) \times 1} & [0]_{(n-1) \times (p-r-1)}
\end{bmatrix} \in \Mat_{n,p-r}(\K).$$
A straightforward computation shows that $\rk \calV \leq r$ and $\underset{M \in \calV}{\bigcap}\,\Ker M$ has dimension $p-r-1$.
Therefore, $\calV$ satisfies all the assumptions of the Common Kernel Theorem but it is not equivalent to a subspace of $\calR(0,r)$.
\end{enumerate}
\end{Rems}

\noindent An important step of the proof will involve a recent result from \cite{affpres}, which
is an extension of Flanders's theorem to affine subspaces:

\begin{theo}\label{FlandersdSP}
Given positive integers $n \geq p$, let $\calV$ be an affine subspace of $\Mat_{n,p}(\K)$
such that $\rk \calV<p$.
Then, $\dim \calV \leq n(p-1)$. \\
If in addition $\dim \calV=n(p-1)$ and $(n,p,\card \K) \neq (2,2,2)$, then
$\calV$ is a linear subspace of $\Mat_{n,p}(\K)$.
\end{theo}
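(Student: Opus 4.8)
The plan is to deduce everything from the classical Flanders--Meshulam theorem for \emph{linear} spaces (quoted in the introduction) via a homogenization trick, leaving only a small-field situation to be treated by hand --- and that is exactly where the exception comes from. If $\calV$ is already a linear subspace there is nothing to prove: $\dim\calV\le n(p-1)$ is Meshulam's bound (case $r=p-1$, $\max(n,p)=n$), and in the equality case the conclusion --- namely that $\calV$ is linear --- holds trivially. So assume $0\notin\calV$, fix $A\in\calV$, and write $\calV=A+V$ with $V$ the direction space and $d:=\dim V=\dim\calV$. One checks readily that $\K A\cap V=\{0\}$ (a nonzero $\lambda A\in V$ would force the line through $A$ and $(1+\lambda)A$, which meets $0$, to lie in $\calV$), so $W:=\K A\oplus V$ is a linear subspace of $\Mat_{n,p}(\K)$ of dimension $d+1$ containing $V$ as a hyperplane, and every matrix of $W\setminus V$, being a nonzero scalar multiple of a matrix of $\calV$, has rank $<p$. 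The statement is thereby reduced to a linear question: if a linear subspace $W\subseteq\Mat_{n,p}(\K)$, with $n\ge p$, admits a hyperplane $V$ such that all matrices of $W\setminus V$ have rank $<p$, bound $\dim W$, and show that $\dim W=n(p-1)+1$ forces $(n,p,\card\K)=(2,2,2)$.

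I would then split according to $\rk V$. If $\rk V<p$, then $\rk W<p$, so Meshulam's theorem applied to $W$ gives $d+1=\dim W\le n(p-1)$, hence $d\le n(p-1)-1$; this branch never reaches the extremal dimension, consistently with the statement. If $\rk V=p$, choose $B\in V$ of full column rank and, after multiplying $\calV$ on the left and the right by invertible matrices, take $B=\begin{bmatrix}I_p\\0\end{bmatrix}$. For every $M\in\calV$ and every $t\in\K$ we have $M+tB\in\calV$, so $\rk(M+tB)<p$; writing $M$ with top block $M_1$ of size $p\times p$, this forces $M_1+tI_p$ to be singular for all $t\in\K$. Since $\det(M_1+XI_p)$ is monic of degree $p$, it has at most $p$ roots, whence $\card\K\le p$: the bad case occurs only over small fields.

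The crux is the case $\rk V=p$, $\card\K\le p$, and this is where the exception arises. Project $\calV$ onto its first $p$ rows to obtain an affine subspace $\calV_1\subseteq\Mat_p(\K)$ of singular matrices whose direction space contains $I_p$; reading $\rk(M+tB)<p$ on the full matrix shows that $-M_1$ has every element of $\K$ as an eigenvalue, for each $M_1\in\calV_1$. When $\card\K=p$ this makes $-M_1$ a regular semisimple matrix with distinct eigenvalues (all of $\K$), so its eigenvectors form a basis of $\K^p$; feeding them into the rank conditions makes the last $n-p$ rows of every matrix of $\calV$ vanish and makes $V$ meet the "lower" matrices trivially, so $\calV$ collapses into $\Mat_p(\K)$ and lies inside a single regular semisimple conjugacy class. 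One then proves a finite-field estimate: an $\F_p$-affine subspace of $\Mat_p(\F_p)$ contained in such a conjugacy class has dimension at most $p^2-p\le n(p-1)$, with equality possible only when $p=2$ (whence $n=p=2$) --- the stated exception, witnessed by a concrete non-linear affine plane of rank-one matrices in $\Mat_2(\F_2)$. When $\card\K<p$ the eigenvectors of $-M_1$ need not span, and I would run an induction on $n$ instead, combining the bound for $\calV_1$ with a bound on the intersection of $V$ with the "lower" matrices extracted from the residual rank conditions, so as to obtain $\dim\calV\le n(p-1)-1$.

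The main obstacle is exactly this case $\rk V=p$, $\card\K\le p$: over a small field the pencil condition "$M_1+tI_p$ singular for all $t$" is not self-contradictory, so --- unlike over large fields, where a degree count disposes of it at once --- one is forced into a delicate, genuinely field-sensitive dimension estimate, and that is precisely where the hypothesis $(n,p,\card\K)\neq(2,2,2)$ is unavoidable. Everything before that (the reduction to the linear case, the homogenization $W=\K A\oplus V$, and the single invocation of Flanders--Meshulam) is soft.
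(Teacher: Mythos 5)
First, a point of reference: the paper does not prove Theorem \ref{FlandersdSP} at all --- it is imported verbatim from \cite{affpres} --- so there is no internal proof to compare yours against, and your argument has to stand on its own.

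Your reduction is sound and well chosen: the homogenization $W=\K A\oplus V$ is correct (including the check that $\K A\cap V=\{0\}$), the case $\rk V<p$ is correctly killed by Meshulam's bound applied to $W$, and the degree count in the case $\rk V=p$ correctly shows that a non-linear $\calV$ whose direction space contains a full-column-rank matrix forces $\card\K\leq p$. This localizes all the difficulty, and the exception, in the small-field case --- which is the right picture. The problem is that this small-field case is the entire content of the theorem (over fields with more than $p$ elements the statement is essentially classical), and you do not prove it. There are two concrete gaps. (1) For $\card\K=p$ you reduce everything to the claim that an affine subspace of $\Mat_p(\K)$ contained in the conjugacy class of matrices with characteristic polynomial $X^{p}-X$ has dimension at most $p^2-p$, with equality only for $p=2$. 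This is asserted, not proved, and it is not an off-the-shelf fact: the only linear condition the fixed characteristic polynomial gives you is the trace (yielding only $p^2-1$), and the most natural route --- every matrix in that class is singular, so invoke the affine bound for singular square matrices --- is precisely the $n=p$ instance of the theorem you are proving, hence circular unless embedded in an induction you have not set up. (2) For $\card\K<p$ (e.g.\ $\K=\F_2$, $p=3$, which is exactly the situation the present paper needs the theorem for) you offer only the sentence ``I would run an induction on $n$'', with no inductive statement, no repair of the eigenvector argument (whose failure you correctly note), and no derivation of the claimed strict bound $n(p-1)-1$. As it stands the proposal is a correct and genuinely useful framing plus an unproved core, not a proof.
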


\noindent We will use the following corollary of Theorem \ref{FlandersdSP}:

\begin{cor}\label{corgen}
Let $n$ and $p$ be positive integers such that $n \geq p$.
Let $\calV$ be a linear subspace of $\Mat_{n,p}(\K)$ such that $\dim \calV>np-n$,
and assume that $(n,p,\card \K) \neq (2,2,2)$ or $\dim \calV>np-n+1$. Then, $\calV$ is spanned by its rank $p$ matrices.
\end{cor}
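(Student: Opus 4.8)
The plan is to deduce this from Theorem \ref{FlandersdSP} by a standard "reduction modulo a hyperplane" argument. Let $U$ denote the linear span of the rank-$p$ matrices in $V$, and suppose for contradiction that $U \subsetneq V$. Pick a linear hyperplane $H$ of $V$ containing $U$, so that every matrix in $V \setminus H$ has rank exactly $p$ (indeed, $V$ does contain a rank-$p$ matrix, since otherwise $\rk V < p$ and Theorem \ref{FlandersdSP} would force $\dim V \leq n(p-1) = np-n$, contrary to hypothesis; and every rank-$p$ matrix of $V$ lies in $U \subset H$, so all matrices outside $H$ have rank $p$). Fix any $A \in V \setminus H$. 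Then the affine subspace $A + H$ is contained in $V$, has the same dimension as $H$, namely $\dim V - 1 > np-n-1$, hence $\dim(A+H) \geq np-n$; and it misses all rank-$p$ matrices except possibly... wait: $A+H \subset V$ and $A + H$ meets $V\setminus H$ only in... every element of $A+H$ has the form $A+h$ with $h \in H$, and $A + h \notin H$, so $A+h$ has rank $p$. That is the opposite of what we want, so instead I would argue with $H$ itself as the affine (in fact linear) object.

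Concretely: since $H$ is a linear subspace of $V$ with $H \subset \{$matrices of rank $< p\} \cup U$, and in fact $H$ consists entirely of matrices of rank $\leq p$ but those of rank exactly $p$ already lie in $U \subset H$ — this does not immediately bound $\rk H$. The cleaner route is to use the affine hyperplane complement. Let me restart the key step: choose $B \in V$ of rank $p$ (exists, as above). Consider $\calV := \{M \in V : \text{the coefficient of } B \text{ in some fixed complement is } 1\}$, i.e. write $V = \K B \oplus H$ for a hyperplane $H$ and set $\calV := B + H$. This is an affine hyperplane of $V$ with $\dim \calV = \dim V - 1 > np - n - 1$, so $\dim\calV \geq np-n = n(p-1)$. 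If $\rk \calV < p$, Theorem \ref{FlandersdSP} applies: either the dimension bound is violated (when $\dim\calV > n(p-1)$, impossible since $\rk\calV<p$ forces $\dim\calV\le n(p-1)$), or $\dim \calV = n(p-1)$ and, provided $(n,p,\card\K)\neq(2,2,2)$ or $\dim\calV > n(p-1)$... — here is the subtlety: the excluded case $(2,2,2)$ in Theorem \ref{FlandersdSP} is exactly why the hypothesis "$(n,p,\card\K)\neq(2,2,2)$ or $\dim V > np-p+1$" appears, since $\dim V > np-p+1$ gives $\dim\calV \geq np-p+1 > n(p-1)$ when $n=p=2$, ruling out the bad case. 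In all permitted cases we conclude $\calV$ is a linear subspace, which is absurd since $0 \notin B + H$. Therefore $\rk\calV = p$: there is a matrix of rank $p$ of the form $B + h$ with $h \in H$. Since $B$ and $B+h$ are both rank-$p$ matrices of $V$, both lie in $U$, hence $h = (B+h) - B \in U$.

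Running this argument over all hyperplanes $H$ containing $U$ shows every element of every such $H$ decomposes appropriately; more efficiently, fix $U \subsetneq V$ and pick any $x \in V \setminus U$, then choose a hyperplane $H$ with $U \subset H$ and $x \in H$ (possible since $\dim(V/U) \geq 1$ and we may arrange $H/U$ to contain $\bar x$ when $\dim(V/U)\geq 2$; when $\dim(V/U)=1$ take $H=U$). Applying the above with this $H$: we found $h \in U$ with $B+h$ of rank $p$. That alone does not yet place $x$ in $U$. So the truly efficient formulation is: assume $U \neq V$, take the hyperplane $H = $ any hyperplane of $V$ through $U$; by the argument, $\rk(B+H) = p$ for a suitable $B\notin H$, i.e. $H$ contains an element $h$ with $\rk(B+h)=p$, forcing $h\in U\subset H$ — fine but vacuous. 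The resolution: vary $B$. For \emph{every} rank-$p$ matrix $C \in V\setminus H$ one shows $C \in U$ trivially; the content is that the rank-$p$ matrices span. I would therefore instead induct on $\dim V$, slicing by a hyperplane $H$ chosen generically so that $H$ still satisfies the corollary's hypotheses (this needs $\dim H > n(p-1)$, i.e. $\dim V > np-n+1$; the boundary case $\dim V = np-n+1$ must be handled directly), apply the inductive hypothesis to get that $H$ is spanned by its rank-$p$ matrices, all of which lie in $U$, whence $H \subset U$; combined with $\rk(B+H)=p$ from Theorem \ref{FlandersdSP} for $B\notin H$, we get $B \in U$ as well, so $V = H + \K B \subset U$.

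\textbf{Main obstacle.} The delicate point is the base of the induction / boundary case $\dim V = np-n+1$ together with the threading of the exceptional triple $(2,2,2)$: one must check that restricting to a hyperplane $H$ preserves either the strict inequality $\dim H > n(p-1)$ or the escape clause, and that when it does not, a direct argument still works. Everything else is the routine transfer between the linear statement and the affine Flanders theorem via affine hyperplanes $B+H$.
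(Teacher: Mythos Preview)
Your first attempt is in fact the paper's proof, but you inverted the rank conclusion and then abandoned it. Take $U$ the span of the rank $p$ matrices, suppose $U\subsetneq V$, and choose a hyperplane $H$ of $V$ with $U\subset H$. Since every rank $p$ matrix of $V$ lies in $U\subset H$, every matrix in $V\setminus H$ has rank \emph{strictly less than} $p$ (not rank $p$ as you wrote). Now pick $M_0\in V\setminus H$: the affine coset $M_0+H$ is disjoint from $H$, hence $\rk(M_0+H)<p$, and $\dim(M_0+H)=\dim V-1\geq n(p-1)$. Theorem \ref{FlandersdSP} then forces $M_0+H$ to be a \emph{linear} subspace (the hypothesis ``$(n,p,\card\K)\neq(2,2,2)$ or $\dim V>np-p+1$'' is exactly what is needed to avoid the exceptional case there, since in the $(2,2,2)$ situation $\dim V>3$ gives $\dim(M_0+H)>2=n(p-1)$ and the first clause of Theorem \ref{FlandersdSP} already yields a contradiction). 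But $M_0+H$ is not linear because $M_0\notin H$. Done.

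Your second attempt, writing $V=\K B\oplus H$ for a rank $p$ matrix $B$ and an \emph{arbitrary} complementary hyperplane $H$, cannot work as stated: nothing prevents $B+H$ from containing rank $p$ matrices, so you have no bound on $\rk(B+H)$. The subsequent discussion and the proposed induction on $\dim V$ are therefore unnecessary detours; the whole corollary is a three-line contrapositive once you keep the rank inequality the right way round.
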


\begin{proof}
Assuming that the contrary holds, there would be a linear hyperplane $H$ of $\calV$ containing all the rank $p$ matrices of $\calV$.
Choosing $M_0 \in \calV \setminus H$, it would follow that $\rk(M_0+H)<p$, whereas $M_0+H$ is an affine subspace
of $\Mat_{n,p}(\K)$ with dimension greater than or equal to $n(p-1)$. As $M_0+H$ is not a linear subspace of $\Mat_{n,p}(\K)$,
this would contradict Theorem \ref{FlandersdSP} unless $(n,p,\# \K)=(2,2,2)$, in which case the contradiction would come from the
fact that $\dim(M_0+H)=\dim \calV-1>n(p-1)$.
\end{proof}

\begin{Rem}
Notice the exceptional case of
$$T_2^+(\F_2)=\biggl\{\begin{bmatrix}
a & b \\
0 & c
\end{bmatrix} \mid (a,b,c)\in \F_2^3\biggr\}$$
which has dimension $3$ but is not spanned by its non-singular elements (there are only two of them!).
\end{Rem}

\vskip 3mm
\noindent Theorem \ref{comkertheo} will obviously ensue should we prove Propositions \ref{comstep1}
and \ref{comstep2} below:

\begin{prop}\label{comstep1}
With the assumptions from Theorem \ref{comkertheo}, one has
$$\forall M \in \calW, \; \im \varphi(M) \subset \im M.$$
\end{prop}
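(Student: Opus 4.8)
The plan is to argue by contradiction: suppose there is some $M_0 \in W$ with $\rk\begin{bmatrix} M_0 & \varphi(M_0)\end{bmatrix} > \rk M_0$. Since we always have $\rk\begin{bmatrix} M_0 & \varphi(M_0)\end{bmatrix} \geq \rk M_0$, and since $\rk\begin{bmatrix} M & \varphi(M)\end{bmatrix} \leq r$ for every $M \in W$ by hypothesis, this forces $\rk M_0 \leq r-1$. The key idea is then to look at the affine line $M_0 + \K M$, or rather the whole affine subspace obtained by fixing the $M$-block and letting $\varphi(M)$ vary as much as the constraint $\rk V \leq r$ allows, and to derive a dimension estimate that contradicts the lower bound $\dim W \geq nr - n + 2$ via Corollary \ref{corgen} (or Theorem \ref{FlandersdSP} directly).

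First I would set $s := \rk M_0 < r$ and, after multiplying $V$ on the left and right by suitable invertible matrices (which changes neither the hypothesis $\rk V \le r$ nor the conclusion we seek, once we track how $W$ and $\varphi$ transform), reduce to the case where $M_0 = \begin{bmatrix} I_s & 0 \\ 0 & 0\end{bmatrix}$, partitioned with $s$ and $n-s$ rows. Now I would examine the map $N \mapsto \begin{bmatrix} M_0 + N & \varphi(M_0+N)\end{bmatrix}$ for $N$ ranging over $W$: writing everything in block form and using that every such matrix has rank $\le r$, a rank argument on the bottom $(n-s) \times$ block shows that the relevant portion of $W \oplus \varphi(W)$ is confined to a space of matrices of bounded rank. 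The point is to produce, inside $\Mat_{n,q}(\K)$ for an appropriate $q$, an affine subspace of the form (fixed matrix) $+$ (linear space) all of whose elements have rank $< q$, whose dimension is forced by $\dim W \geq nr-n+2$ to exceed the Flanders bound $n(q-1)$ unless it is actually linear — and then to rule out linearity, or to push one step further when it is linear, exactly as in the proof of Corollary \ref{corgen}. The exceptional exclusion $(n,r,\card\K)\neq(3,2,2)$ or $\dim W > nr-n+2$ is precisely what is needed to invoke the sharp case of Theorem \ref{FlandersdSP} (whose own exceptional case is $(2,2,2)$), so I expect the bookkeeping to match up.

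The main obstacle, I expect, is organizing the reduction so that the affine subspace one feeds into Theorem \ref{FlandersdSP} genuinely has dimension $\geq n(q-1)$: one must be careful that projecting $W$ onto the right sub-block of columns, and pairing it with the $\varphi$-part, does not lose too many dimensions. Concretely, after the normalization of $M_0$, one should isolate the columns of $\begin{bmatrix} M & \varphi(M)\end{bmatrix}$ indexed outside the "pivot" columns of $M_0$, restrict to the bottom $n-s$ rows, and check that the resulting affine subspace — translated by the contribution coming from $M_0$ having rank exactly $s$ in the top rows — has the right dimension. Getting a clean count there, and handling the borderline case where Flanders returns a linear space (so that one argues as in Corollary \ref{corgen} by picking an element off a hyperplane), is the delicate part; the rest is linear algebra bookkeeping on block matrices.
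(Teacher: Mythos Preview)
Your plan is genuinely different from the paper's, and as written it has a real gap at exactly the point you label ``the delicate part''.

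The paper does not argue by contradiction and never normalises a bad $M_0$. Instead it proves the stronger pointwise inclusion $\im\varphi(M)\subset\im M$ directly, one hyperplane at a time. For the hyperplane $\K^{n-1}\times\{0\}$, let $W'\subset W$ be the matrices with vanishing last row, identified with a subspace $H\subset\Mat_{n-1,r}(\K)$. The rank theorem gives $\dim H\ge\dim W-r>(n-1)(r-1)$, so Corollary~\ref{corgen} says $H$ is spanned by its rank-$r$ elements (and the hypothesis $(n,r,\card\K)\neq(3,2,2)$ or $\dim W>nr-n+2$ is exactly what dodges the exceptional case of that corollary, as you correctly anticipated). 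For each rank-$r$ matrix $N\in H$ the first $r$ columns of $\bigl[\begin{smallmatrix}N\\0\end{smallmatrix}\;\big|\;\varphi\bigl(\begin{smallmatrix}N\\0\end{smallmatrix}\bigr)\bigr]$ already span an $r$-dimensional column space with zero last coordinate, so the rank bound forces the last row of $\varphi$ to vanish there; by linearity it vanishes on all of $W'$. Left-multiplying by $\GL_n(\K)$ carries this to every hyperplane $\mathcal H$: $\im M\subset\mathcal H\Rightarrow\im\varphi(M)\subset\mathcal H$. Intersecting over all hyperplanes containing $\im M$ gives $\im\varphi(M)\subset\im M$, hence the rank equality.

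Your route runs into two concrete obstacles. First, after putting $M_0=\begin{bmatrix}I_s&0\\0&0\end{bmatrix}$, the Schur complement of the top-left $s\times s$ block in $[M_0+N\mid\varphi(M_0+N)]$ is \emph{quadratic} in $N$ (it contains the term $N_{21}[N_{12}\mid\varphi(N)_1]$), so it does not trace out an affine subspace to which Theorem~\ref{FlandersdSP} applies. To linearise you must impose, say, $N_{21}=0$, but then you need a \emph{lower} bound on the dimension of the image of $N\mapsto[N_{22}\mid\varphi(N)_2]$, and nothing in the hypotheses supplies one: $\varphi$ could kill the bottom rows on that restricted subspace. Second, the rank bound you get on the Schur complement is $\le r-s$, which in $\Mat_{n-s,p-s}(\K)$ is generally much smaller than ``one less than the number of columns''; Theorem~\ref{FlandersdSP} as stated only handles the latter, so you would need the full affine Flanders theorem for arbitrary rank bounds, which the paper does not provide. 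The hyperplane trick is precisely what avoids both issues: it reduces everything to a single-row vanishing statement, where Corollary~\ref{corgen} applies cleanly to $\Mat_{n-1,r}(\K)$.
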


\begin{prop}\label{comstep2}
Let $n$, $p$ and $r$ be three positive integers such that $p>r$. Let $\calW$ be a linear subspace of
$\Mat_{n,r}(\K)$ such that $\dim \calW \geq n\,r-n+2$. Let
$\varphi : \calW \rightarrow \Mat_{n,p-r}(\K)$ be a linear map.
Consider the linear subspace
$$\calV=\Bigl\{\begin{bmatrix}
M & \varphi(M)
\end{bmatrix}\mid M \in \calW\Bigr\},$$
and assume that $\im \varphi(M) \subset \im M$ for all $M \in \calW$.
Then, there exists a $(p-r)$-dimensional subspace $G$ of $\K^p$ such that $G \subset \Ker N$ for every $N \in \calV$.
\end{prop}

Proposition \ref{comstep2} is a corollary of Lemma \ref{reprtheorem} from the introduction.
In the rest of the section, we shall successively prove Proposition \ref{comstep1},
then Lemma \ref{reprtheorem}, and then derive Proposition \ref{comstep2} from it. We will conclude by explaining
how Theorem \ref{algrefl} on algebraic reflexivity follows from Lemma \ref{reprtheorem}.

\subsection{Proof of Proposition \ref{comstep1}}
Under the assumptions of Theorem \ref{comkertheo},
consider the linear subspace $\calW'$ consisting of the matrices in $\calW$ having the form
$M=\begin{bmatrix}
N \\
[0]_{1 \times r}
\end{bmatrix}$ for some $N \in \Mat_{n-1,r}(\K)$.
Then, we find a linear subspace $\calH$ of $\Mat_{n-1,r}(\K)$ and a linear isomorphism
$i : \calH \overset{\simeq}{\rightarrow} \calW'$ such that
$i(N)=\begin{bmatrix}
N \\
[0]_{1 \times r}
\end{bmatrix}$ for every $N \in \calH$.
We have linear maps $C$ and $\alpha$ defined on $\calH$ such that
$$\forall N \in \calH, \; \varphi(i(N))=\begin{bmatrix}
C(N) \\
\alpha(N)
\end{bmatrix} \quad \text{with $C(N) \in \Mat_{n-1,p-r}(\K)$ and $\alpha(N) \in \Mat_{1,p-r}(\K)$.}$$
Given $N \in \calH$ such that $\rk N=r$,
the fact that $\rk \begin{bmatrix}
i(N) & \varphi(i(N))
\end{bmatrix} \leq r$ reads
$$\rk \begin{bmatrix}
N & C(N) \\
[0]_{1 \times r} & \alpha(N)
\end{bmatrix} \leq r,$$
and hence $\alpha(N)=0$.
However, the rank theorem shows that $\dim \calH=\dim \calW' \geq \dim \calW-r>(n-1)(r-1)$.
It follows from Corollary \ref{corgen} that $\calH$ is spanned its rank $r$ matrices,
which yields $\alpha=0$ (notice, in the exceptional case when $n=3$, $r=2$ and $\K \simeq \F_2$, that the assumptions of Theorem \ref{comkertheo}
ensure that $\dim \calH \geq (n-1)(r-1)+2$).
To sum up, we have proved that for every $M \in \calW$, the condition $\im M \subset \K^{n-1} \times \{0\}$ implies
$\im \varphi(M) \subset \K^{n-1} \times \{0\}$.

Notice that the assumptions remain essentially unchanged should $\calW$ be replaced with
$P\,\calW$ for an arbitrary non-singular matrix $P \in \GL_n(\K)$ (and $\varphi$ replaced with $M \mapsto P\varphi(P^{-1}M)$).
As the natural action of $\GL_n(\K)$ on the
set of linear hyperplanes of $\K^n$ is transitive, we deduce that for every linear hyperplane $H$ of $\K^n$
and for every $M \in \calW$, the condition
$\im M \subset H$ implies $\im \varphi(M) \subset H$.

Finally, let $M \in \calW$. Writing $\im M$ as the intersection of a family of linear hyperplanes of $\K^n$, we
deduce from the above result that $\im \varphi(M) \subset \im M$. Thus, Proposition \ref{comstep1} is proved.

\subsection{Proof of Lemma \ref{reprtheorem}}

We perform an induction on $r$, with $n$ and $p$ fixed.
The case $r=0$ is trivial (the case $r=1$ is also easy but the reader will carefully check that we
actually start from $r=0$).
Given some positive integer $r$, assume that Lemma \ref{reprtheorem} holds for $(n,r-1,p)$.
Let $\calV$ and $\varphi$ be as in Lemma \ref{reprtheorem} for the triple $(n,r,p)$.
In $\calV$, consider the linear subspace $\calW$ of matrices of the form
$M=\begin{bmatrix}
[0]_{n \times 1} & K(M)
\end{bmatrix}$ with $K(M) \in \Mat_{n,r-1}(\K)$ (i.e.\ the matrices of $\calV$ with first column zero).
The rank theorem shows that $\dim \calW \geq \dim \calV-n \geq n(r-1)-n+2$.
Thus, $K(\calW)$ and the map $M \mapsto \varphi(K^{-1}(M))$ satisfy the assumptions of Lemma \ref{reprtheorem},
which yields a matrix $C \in \Mat_{r-1,p}(\K)$ such that
$\varphi(M)=K(M)C$ for every $M \in \calW$. \\
Setting $\widetilde{C}:=\begin{bmatrix}
[0]_{1 \times p} \\
C
\end{bmatrix} \in \Mat_{r,p}(\K)$ and replacing $\varphi$ with $M \mapsto \varphi(M)-M\widetilde{C}$,
we leave both the assumptions and the desired conclusion unchanged, so that no generality is lost in assuming that
$$\forall M \in \calW, \; \varphi(M)=0.$$
Now, set $F:=\bigl\{C_1(M) \mid M \in \calV\bigr\} \subset \K^n$, where $C_1(M)$ denotes the first column of $M$.
The rank theorem shows that
$$\dim F^\bot+\dim K(\calW)^\bot=\dim \Mat_{n,r}(\K)-\dim \calV \leq n-2.$$
We choose a basis $(x_1,\dots,x_s)$ of $F^\bot$ and a basis $(y_1,\dots,y_t)$ of the linear subspace of $K(\calW)^\bot$
spanned by its rank $1$ matrices. Then, $\im A \subset \im(y_1)+\cdots+\im(y_t)$ for every rank $1$ matrix $A$ in $K(\calW)^\bot$,
and $x \in \Vect(x_1,\dots,x_s)$ for every $x \in F^\bot$. Define finally
$$G:=\Vect(x_1,\dots,x_s)+\im(y_1)+\cdots+\im(y_t) \subset \K^n$$
and notice that $\dim G \leq s+t \leq \dim F^\bot+\dim K(\calW)^\bot \leq n-2$
(and that $G$ does not depend on the choice of $(y_1,\dots,y_t)$).
Now, we use the following lemma, the proof of which we postpone:

\begin{lemme}\label{nicebasis}
Let $E$ be an $n$-dimensional vector space, and
$H$ be a linear subspace of $E$ such that $\codim_E H \geq 2$. Then, there is a basis
$(e_1,\dots,e_n)$ of $E$ such that each plane $\Vect(e_1,e_2)$, $\Vect(e_2,e_3)$, \dots, $\Vect(e_{n-1},e_n)$ intersects $H$ trivially.
\end{lemme}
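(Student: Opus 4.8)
The plan is to build the basis greedily, one vector at a time, maintaining the invariant that the plane spanned by the last two chosen vectors meets $H$ trivially. Set $c:=\codim_E H\geq 2$, so $\dim H=n-c$. I would start by picking any $e_1\in E\setminus H$ (possible since $H\neq E$): then $\Vect(e_1)\cap H=\{0\}$. Now suppose $e_1,\dots,e_k$ have been chosen, linearly independent, with $\Vect(e_{k-1},e_k)\cap H=\{0\}$ (for $k\geq 2$) and additionally with the stronger property that $e_k\notin H+\Vect(e_1,\dots,e_{k-1})$ is \emph{not} what I want — rather, the right invariant to carry is simply: the chosen vectors are independent and each consecutive pair spans a plane disjoint from $H$. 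To extend, I must find $e_{k+1}$ such that $(e_1,\dots,e_{k+1})$ is independent and $\Vect(e_k,e_{k+1})\cap H=\{0\}$.

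The key point is a counting/dimension argument for the extension step. The vectors $v$ for which $\Vect(e_k,v)$ meets $H$ nontrivially are exactly those $v$ lying in $H+\Vect(e_k)$: indeed, if $\lambda e_k+\mu v\in H$ with $(\lambda,\mu)\neq(0,0)$, then $\mu\neq 0$ (else $e_k\in H$, contradicting $\Vect(e_{k-1},e_k)\cap H=\{0\}$ when $k\geq2$, or the choice of $e_1$ when $k=1$), so $v\in H+\Vect(e_k)$; conversely any $v\in H+\Vect(e_k)$ obviously gives such a relation. Thus the ``bad'' set for the disjointness condition is the subspace $H+\Vect(e_k)$, of dimension at most $(n-c)+1=n-c+1\leq n-1$. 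The ``bad'' set for the independence condition is $\Vect(e_1,\dots,e_k)$, of dimension $k$. As long as $k+1\leq n$, i.e.\ we have not yet finished, we need a vector outside $(H+\Vect(e_k))\cup\Vect(e_1,\dots,e_k)$. If $\K$ is infinite this is immediate since a vector space is never a union of two proper subspaces. Over a finite field I would instead argue directly by dimension: I claim $H+\Vect(e_1,\dots,e_k)$ has dimension at most $n-1$ while $k\le n-1$, hence... — but this needs care, so let me restructure.

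A cleaner route avoiding the ``union of two subspaces'' subtlety: first choose $e_1,e_2$ spanning a plane $\Pi$ with $\Pi\cap H=\{0\}$ — this is possible because $\codim H\geq 2$ means $H$ is not a hyperplane, so we may pick $e_1\notin H$, then pick $e_2\notin H+\Vect(e_1)$ (a proper subspace since $\dim(H+\Vect(e_1))\leq n-1$), giving $\Vect(e_1,e_2)\cap H=\{0\}$ and $e_1,e_2$ independent. Then complete $(e_1,e_2)$ to a basis in \emph{any} way and observe that, for $j\geq 3$, we have complete freedom: I claim we can choose $e_3,\dots,e_n$ successively so that each $\Vect(e_{j-1},e_j)\cap H=\{0\}$. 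For the step choosing $e_j$ ($j\geq 3$): the forbidden set for independence is $\Vect(e_1,\dots,e_{j-1})$, dimension $j-1\leq n-1$; the forbidden set for $\Vect(e_{j-1},e_j)\cap H=\{0\}$ is $H+\Vect(e_{j-1})$, dimension $\leq n-1$ as above (using $e_{j-1}\notin H$, which holds since $\Vect(e_{j-2},e_{j-1})\cap H=\{0\}$). The main obstacle is precisely that over $\F_2$ a space can be a union of two proper subspaces (e.g.\ $\F_2^2$ is the union of its three lines, but also — relevantly — a $2$-dim space is \emph{not} the union of two lines; in general $\F_2^m$ is never the union of two proper subspaces, since two proper subspaces have $\leq 2^{m-1}+2^{m-1}-1<2^m$ elements). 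So in fact the union-of-two-subspaces obstruction does \emph{not} arise: two proper subspaces of any vector space $E$ never cover $E$, because $|U\cup U'|\le |U|+|U'|-1$ when $0\in U\cap U'$, and if $E$ is finite with $|E|=q^m$ then $|U|+|U'|-1\le 2q^{m-1}-1<q^m$; if $E$ is infinite it is classical. Hence at each step a valid $e_j$ exists, and the induction goes through, completing the construction of the desired basis.
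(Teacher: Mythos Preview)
Your greedy construction is correct and complete. The key step---that at stage $k<n$ the two obstruction sets $H+\Vect(e_k)$ and $\Vect(e_1,\dots,e_k)$ are proper subspaces whose union therefore cannot exhaust $E$---is sound over every field (the counting argument you give for finite $\K$ is fine, and the infinite case follows from the standard ``pick $u\in U\setminus U'$, $u'\in U'\setminus U$, consider $u+u'$'' trick). One tiny inaccuracy: the word ``exactly'' in your characterisation of the bad set for the disjointness condition is too strong, since $v\in\Vect(e_k)$ gives $\Vect(e_k,v)\subset\Vect(e_k)$, which meets $H$ trivially; but this is harmless because such $v$ are already excluded by the independence constraint.

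The paper takes a completely different, non-inductive route: it first reduces to $\codim H=2$, then uses transitivity of $\GL_n(\K)$ on $(n-2)$-dimensional subspaces to put $H$ into the explicit normal form $\sum_k x_{2k}=0$, $\sum_k x_{2k+1}=0$, for which the \emph{canonical} basis visibly works (each $\Vect(e_i,e_{i+1})$ contains one even- and one odd-indexed coordinate direction, so both defining forms restrict nontrivially). The paper's argument is slicker once one spots the right normalisation, and gives an explicit basis in one stroke; your argument is more robust and self-contained, needing no clever choice of coordinates and no appeal to the transitivity of $\GL_n$.
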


Replacing $\calV$ and $\varphi$ respectively with $P\,\calV$ and $M \mapsto P\,\varphi(P^{-1}M)$
for some well-chosen non-singular matrix $P \in \GL_n(\K)$,
and denoting by $(e_1,\dots,e_n)$ the canonical basis of $\K^n$, we may now assume that
each plane $\Vect(e_1,e_2)$, $\Vect(e_2,e_3)$, \dots, $\Vect(e_{n-1},e_n)$ intersects $G$ trivially.
In this situation, we can compute $\varphi$.
Let $A \in \calV$ be such that $a_{1,1}=0$, and write
$$A=\begin{bmatrix}
0 & L_1 \\
[?]_{(n-1) \times 1} & [?]_{(n-1) \times (r-1)}
\end{bmatrix} \quad \text{with $L_1 \in \Mat_{1,r-1}(\K)$.}$$
Since $e_1$ does not span the image of any $A \in K(\calW)^\bot$, some matrix of $K(\calW)$
has first row $L_1$, i.e.\ some matrix of $\calV$ has the form
$$A'=\begin{bmatrix}
0 & L_1 \\
[0]_{(n-1) \times 1} & [?]_{(n-1) \times (r-1)}
\end{bmatrix}.$$
Then, $\varphi(A')=0$ and hence
$$\varphi(A)=\varphi(A-A')=\varphi\begin{bmatrix}
0 & [0]_{1 \times (r-1)} \\
[?]_{(n-1) \times 1} & [?]_{(n-1) \times (r-1)}
\end{bmatrix}.$$
The assumptions of Lemma \ref{reprtheorem} show that the first row of $\varphi(A)$ must be $0$.
More generally, since none of the $e_i$'s belongs to $G$, we obtain that, for all $M \in \calV$ and all $i \in \lcro 1,n\rcro$,
the $i$-th row of $\varphi(M)$
is zero whenever $m_{i,1}=0$ (\emph{notice that this only uses the fact that none of
$e_1,\dots,e_n$ belongs to $G$}).
By the factorization lemma for linear maps, this yields
row matrices $R_1,\dots,R_n$ in $\Mat_{1,p}(\K)$ such that
$$\forall M \in \calV, \;
\varphi(M)=\begin{bmatrix}
m_{1,1}\,R_1 \\
m_{2,1}\,R_2 \\
\vdots \\
m_{n,1}\,R_n
\end{bmatrix}.$$
Let us prove that $R_1=R_2=\cdots=R_n$.
By performing the row operation $L_1 \leftarrow L_1-L_2$, we transform the pair
$(\calV,\varphi)$ into a new pair $(\calV',\varphi')$ which essentially has the same properties, save for the
assumption on the relationship between $e_1,\dots,e_n$ and the subspace $G'$ (which we associate with $\calV'$ as we associated $G$ with $\calV$).
Since $e_1-e_2 \not\in G$, we have $e_1 \not\in G'$
and the above arguments show that $m_{1,1}\,R_1-m_{2,1}\,R_2=0$ for every $M \in \calV$ for which $m_{1,1}-m_{2,1}=0$.
Besides, there exists some $M \in \calV$ such that $m_{1,1}=m_{2,1}=1$.
Indeed, if not, there would be a non-zero vector $X \in F^\bot \cap \Vect(e_1,e_2)$,
yielding $G \cap \Vect(e_1,e_2) \neq \{0\}$. We deduce that $R_1=R_2$. \\
More generally, for every $i \in \lcro 1,n-1\rcro$, using the row operation $L_i \leftarrow L_i-L_{i+1}$
and the fact that $G \cap \Vect(e_i,e_{i+1})=\{0\}$ shows that $R_i=R_{i+1}$. \\
Therefore, $\forall M \in \calV, \; \varphi(M)=MC$ for $C:=\begin{bmatrix}
R_1 \\
[0]_{(r-1)\times p}
\end{bmatrix}$. \\
Thus, the proof of Lemma \ref{reprtheorem} will be complete when we prove Lemma \ref{nicebasis}.

\begin{proof}[Proof of Lemma \ref{nicebasis}]
It suffices to tackle the case when $E=\K^n$ and $\codim_E H=2$. Since $\GL_n(\K)$ acts transitively on the set of $(n-2)$-dimensional linear subspaces of $E$, we may also assume that $H$ is the subspace defined by the following system of (independent) linear equations:
$$\underset{k=1}{\overset{\lfloor n/2\rfloor}{\sum}}x_{2k}=0 \quad ; \quad
\underset{k=0}{\overset{\lfloor(n-1)/2\rfloor}{\sum}}x_{2k+1}=0$$
where, for $t \in \R$, we have denoted by $\lfloor t\rfloor$ the greatest integer $k$ such that $k \leq t$.
It is then easily checked that the canonical basis $(e_1,\dots,e_n)$ of $\K^n$
satisfies the conclusion of Lemma \ref{nicebasis} for this particular space $H$.
\end{proof}

This completes the proof of Lemma \ref{reprtheorem}.

\subsection{Proof of Proposition \ref{comstep2}}

Obviously, $\varphi : \calW \rightarrow \Mat_{n,p-r}(\K)$ satisfies the assumptions of Lemma \ref{reprtheorem},
which yields a matrix $C \in \Mat_{r,p-r}(\K)$ such that $\varphi(M)=MC$ for every $M \in \calW$. \\
Setting $A:=\begin{bmatrix}
C \\
-I_{p-r}
\end{bmatrix} \in \Mat_{p,p-r}(\K)$, we deduce that $\forall N \in \calV, \; NA=0$. \\
Therefore, every matrix of $\calV$ vanishes everywhere on $\im(A)$, which has dimension $p-r$, visibly.

Thus, Corollary \ref{comstep2} is established, which finishes the proof of the Common Kernel Theorem.

\subsection{From the representation lemma to a sufficient condition for algebraic reflexivity}

In this short paragraph, we derive Theorem \ref{algrefl} on algebraic reflexivity
from Lemma \ref{reprtheorem}.
Let $U$, $V$ and $\calS$ be as in Theorem \ref{algrefl}. Fix a basis $(f_1,\dots,f_r)$
of $\calS$, and let $f : U \rightarrow V$ be a linear map such that
$f(x) \in \calS x$ for all $x \in U$, that is $f(x) \in \Vect(f_1(x),\dots,f_r(x))$ for all $x \in U$.
Fix a basis $\bfB$ of $V$, and set $n:=\dim V$.
For $x \in U$, set
$$M(x)=\Mat_{\bfB}(f_1(x),\dots,f_r(x)) \quad \text{and} \quad N(x)=\Mat_{\bfB}(f_1(x),\dots,f_r(x),f(x)).$$
Set $\calV:=M(U)$. Since $\forall x \in U, \; M(x)=0 \Rightarrow f(x)=0$, the factorization lemma for linear maps
yields a linear map $\varphi : \calV \rightarrow \K^n$ such that
$$\forall x \in U, \; N(x)=\begin{bmatrix}
M(x) & \varphi(M(x))
\end{bmatrix}.$$
The assumptions on $f$ show that $\varphi(M(x)) \in \im M(x)$ for all $x \in U$, that is
$\im \varphi(A) \subset \im A$ for all $A \in \calV$.
On the other hand, the rank theorem shows that $\dim \calV=\dim U-\dim U_0$, and hence
$\dim \calV \geq nr-n+2$. Therefore, Lemma \ref{reprtheorem} applies to the pair $(\calV,\varphi)$
and gives rise to a column vector $C=\begin{bmatrix}
\lambda_1 \\
\vdots \\
\lambda_r
\end{bmatrix}$ such that $\varphi(A)=AC$ for all $A \in \calV$. This shows that $f=\underset{k=1}{\overset{r}{\sum}}\lambda_k\,f_k \in \calS$.

Thus, Theorem \ref{algrefl} is established.

\section{Reduction to the Common Kernel Theorem (I)}\label{reductionI}

In this section, we tackle the case of sharp inequality in Theorems \ref{square} and \ref{rectangular}.
As we have already explained, the basic strategy is to reduce the situation to the one of the Common Kernel Theorem.
This reduction involves ideas from a recent new proof of Flanders's theorem \cite{affpres}, together with Lemma \ref{geninverse}
which was stated in the introduction. We shall start by proving Lemma \ref{geninverse}.

\subsection{Inverse transitivity for a large subspace of matrices}

First of all, let us recall the statement of Lemma \ref{geninverse}:

\begin{center}
Let $\calV$ be an affine subspace of $\Mat_n(\K)$ such that $\codim \calV < n-1$. \\
Then, for every non-zero vector $x \in \K^n \setminus \{0\}$, one has
$$\Vect \bigl\{ A^{-1}x \mid A \in \calV \cap \GL_n(\K)\bigr\}=\K^n.$$
\end{center}
Remark that the upper bound $n-1$ is tight: indeed, the linear subspace
$\calV:=\K \vee \Mat_{n-1}(\K)$ has codimension $n-1$ in $\Mat_n(\K)$ and, for $e_1:=\begin{bmatrix}
1 & 0 & \cdots & 0
\end{bmatrix}^T \in \K^n$, one finds $A^{-1}e_1 \in \K e_1$ for all $A \in \calV \cap \GL_n(\K)$.

\vskip 3mm
Now, let us prove Lemma \ref{geninverse} as a corollary to Dieudonn\'e's theorem on affine subspaces of singular matrices.

We denote by $(e_1,\dots,e_n)$ the canonical basis of $\K^n$.
Let $x \in \K^n \setminus \{0\}$.
Our aim is to prove that $\Vect\bigl\{A^{-1} x\mid A \in \calV \cap \GL_n(\K)\bigr\}=\K^n$.

Here, we consider the symmetric bilinear form $b : (A,B) \mapsto \tr(AB)$ on $\Mat_n(\K)$,
we denote by $V$ the translation vector space of $\calV$ and by
$V^\bot$ the orthogonal subspace of $V$ for $b$.
In $V^\bot$, the rank $1$ matrices span a linear subspace of which we choose a basis $(B_1,\dots,B_p)$
consisting of rank $1$ matrices. Set $F:=\underset{k=1}{\overset{p}{\sum}} \im B_k$. Then, $\dim F \leq p\leq \codim \calV \leq n-2$
and $\im B \subset F$ for every $B \in V^\bot$ such that $\rk B=1$.
Note that there is a basis $(f_1,\dots,f_n)$ of $\K^n$ of which no vector belongs to
$F$ (use Lemma \ref{nicebasis}, for example). Replacing $\calV$ with $P\,\calV\,Q$ for a well-chosen pair $(P,Q) \in \GL_n(\K)^2$,
we reduce the situation to the one where:
\begin{enumerate}[(i)]
\item $x=e_1$;
\item for every $i \in \lcro 1,n\rcro$, the subspace $V^\bot$ contains no matrix $B$ such that $\im B=\Vect(e_i)$.
\end{enumerate}
By assumption (ii) for $i=1$, we find that, for any $C \in \Mat_{n,1}(\K)$, there is a matrix in
$\calV$ with first column $C$. In particular, $\calV$ contains a matrix of the form
$$A_0=\begin{bmatrix}
1 & L_0 \\
[0]_{(n-1) \times 1} & K_0
\end{bmatrix}, \quad \text{where $L_0 \in \Mat_{1,n-1}(\K)$ and $K_0 \in \Mat_{n-1}(\K)$.}$$
Now, denote by $G$ the linear subspace of $V$ consisting of its matrices with first column zero.
We write every $M \in G$ as
$$M=\begin{bmatrix}
0 & L(M) \\
[0]_{(n-1) \times 1} & K(M)
\end{bmatrix}, \quad \text{where $L(M) \in \Mat_{1,n-1}(\K)$ and $K(M) \in \Mat_{n-1}(\K)$.}$$
The rank theorem show that $\dim K(G)>(n-1)(n-2)$, and hence Dieudonn\'e's theorem for affine subspaces
(see \cite{Dieudonne} or \cite{affpres}) shows that the affine subspace $K_0+K(G)$ of $\Mat_{n-1}(\K)$ contains a non-singular matrix.
It follows that there is a non-singular matrix $P \in \GL_{n-1}(\K)$ and a row matrix $L_1 \in \Mat_{1,n-1}(\K)$ such that
$$A_1=\begin{bmatrix}
1 & L_1 \\
[0]_{(n-1) \times 1} & P
\end{bmatrix} \in \calV.$$
Therefore, $A_1^{-1}x=e_1$. Using the same method on every column, we find that, for every $i \in \lcro 1,n\rcro$,
the affine subspace $\calV$ contains a non-singular matrix $A_i$ with $i$-th column $\begin{bmatrix}
1 & 0 & \cdots & 0
\end{bmatrix}^T$, to the effect that $A_i^{-1}x=e_i$. Therefore
$\Vect\{A^{-1}x \mid A \in \calV \cap \GL_n(\K)\}=\K^n$, as claimed. This completes the proof of Lemma \ref{geninverse}.

\subsection{The general starting point}\label{start}

Let $n$, $p$ and $r$ be positive integers such that $n \geq p>r$, and
$\calV$ be an $\overline{r}$-subspace of $\Mat_{n,p}(\K)$
such that $\dim \calV \geq nr-r+1+p-n$. Then, $\dim \calV>n(r-1)$, and hence Flanders's theorem
(see \cite{Meshulam} or \cite{affpres} for the generalization to an arbitrary field) forbids $\rk \calV \leq r-1$, which shows that $\rk \calV=r$.
Replacing $\calV$ with an equivalent subspace if necessary,
we see that no generality is lost in assuming that $\calV$ contains the matrix
$$J_r:=\begin{bmatrix}
I_r & [0]_{r \times (p-r)} \\
0_{(n-r) \times r} & [0]_{(n-r) \times (p-r)}
\end{bmatrix}.$$
Let $M=\begin{bmatrix}
P & C \\
L & \alpha
\end{bmatrix} \in \calV$, with blocks $P$, $L$, $C$ and $\alpha$ of size
$r\times r$, $(n-r)\times r$, $r\times (p-r)$ and $(n-r) \times (p-r)$, respectively.
In the rest of the proof, the block decompositions will have the same configuration unless specified otherwise.

Our basic tool is the formula
\begin{equation}
\label{comatrixformula}
\boxed{L \widetilde{P} C=\det(P)\,\alpha.}
\end{equation}
To get this identity, one computes that, given a pair $(i,j)\in \lcro 1,n-r\rcro \times \lcro 1,p-r\rcro$,
the determinant of the submatrix of $M$ obtained in selecting row indexes in $\lcro 1,r\rcro \cup \{i+r\}$ and
column indexes in $\lcro 1,r\rcro \cup \{j+r\}$ is
$$\begin{vmatrix}
P & T_j \\
R_i & \alpha_{i,j}
\end{vmatrix},$$
where $R_i$ is the $i$-th row of $L$, and $T_j$ the $j$-th column of $C$.
Classically, this determinant equals $-R_i \widetilde{P} T_j+\det(P) \alpha_{i,j}$.
The formula ensues by noting that $R_i \widetilde{P} T_j$ is the entry of $L \widetilde{P} C$ at the $(i,j)$-spot.

Let us write every $M \in \calV$ as
$M=\begin{bmatrix}
K(M) & C(M) \\
L(M) & \alpha(M)
\end{bmatrix}$, and set $\calW:=\Ker K$, i.e.\ $\calW$ is the linear subspace of $\calV$ consisting of its matrices of the form
$\begin{bmatrix}
[0]_{r \times r} & ? \\
? & ?
\end{bmatrix}$.

Let $A=\begin{bmatrix}
P_1 & C_1 \\
L_1 & \alpha_1
\end{bmatrix} \in \calV$.
For every $M \in \calW$, the matrix $A+M$ belongs to $\calV$ and has $P_1$ as upper-left block, whence \eqref{comatrixformula} shows that
$$(L(M)+L_1)\,\widetilde{P_1}\,(C(M)+C_1)=\det(P_1)\bigl(\alpha(M)+\alpha_1\bigr).$$
Subtracting \eqref{comatrixformula} applied to $A$, we deduce that
\begin{equation}\label{general}
\forall M \in \calW, \; L(M)\,\widetilde{P_1}\,C(M)=\det(P_1)\,\alpha(M)-L(M) \,\widetilde{P_1}\, C_1-L_1\, \widetilde{P_1}\,C(M).
\end{equation}
Notice that the left-hand side of \eqref{general} is a quadratic function $\varphi$ of $M$ on $\calW$, whereas the right-hand side is a linear one.
By computing the polar function of $\varphi$ as defined by $b_\varphi(M,N):=\varphi(M+N)-\varphi(M)-\varphi(N)$, we deduce that
\begin{equation}\label{bilin}
\forall (M,N) \in \calW^2, \; L(M)\,\widetilde{P_1}\,C(N)+L(N)\,\widetilde{P_1}\,C(M)=0.
\end{equation}
Now, consider the linear subspace $\calH$ consisting of the matrices of $\calW$ which have the form
$\begin{bmatrix}
[0]_{r \times r} & [?]_{r \times (p-r)} \\
[0]_{(n-r)\times r} & [?]_{(n-r)\times (p-r)}
\end{bmatrix}$, i.e.\ matrices with all first $r$ columns zero.

With the special case $A=J_r$, we note that identity \eqref{general} yields that the linear map $M \mapsto C(M)$ is one-to-one on $\calH$,
and hence $\dim \calH=\dim C(\calH)$. With the rank theorem, we deduce that
$$\dim \calV=\dim K(\calV)+\dim L(\calW)+\dim C(\calH).$$
Returning to the general case, identity \eqref{bilin} shows that
\begin{equation}\label{ultimatenil}
\boxed{\forall M \in \calW, \; \forall N \in \calH, \; \forall P \in K(\calV), \;  L(M)\,\,\widetilde{P}\,C(N)=0.}
\end{equation}
Notice in particular that $I_r \in K(\calV)$, to the effect that:
$$\forall M \in \calW, \; \forall N \in \calH, \; L(M)\,C(N)=0.$$
Notice finally that if we have $\calH=\{0\}$ (which is not always the case),
then the factorization lemma for linear maps
shows that $\calV$ has the form given in the Common Kernel Theorem, and hence
$\calV$ is equivalent to a subspace of $\calR(0,r)$ unless $(n,p,r,\card \K)=(3,3,2,2)$.

\vskip 3mm
\noindent Finally, we set
$$G:=\sum_{N\in \calH} \im C(N) \quad \text{and}  \quad q:=\dim G.$$
In the rest of the section, we will focus on the case when $\dim \calV>n\,r-r+1+p-n$; we
wait until Section \ref{reductionII} to tackle the case of equality
(in that prospect, the following two paragraphs will serve as a necessary warm-up).

\subsection{The case $\dim \calV>nr-r+1$ for square matrices}

Here, we assume that $n=p$ and $\dim \calV>nr-r+1$.
On one hand $\dim C(\calH) \leq q\,(n-r)$ since $\im C(N) \subset G$ for every $N \in \calH$;
on the other hand, every matrix of $L(\calW)$ vanishes everywhere on $G$, and hence $\dim L(\calW) \leq (r-q)\,(n-r)$.
We deduce that
$$\dim L(\calW)+\dim C(\calH) \leq r\,(n-r),$$
which yields
$$\dim K(\calV)>r^2-r+1.$$
With the additional assumption $C(\calH) \neq \{0\}$, we choose $C \in C(\calH) \setminus \{0\}$ and use Lemma \ref{geninverse} to obtain
$\underset{A \in K(\calV)}{\sum} \im \widetilde{A}C=\K^r$; then, identity \eqref{ultimatenil} shows that
$\forall M \in \calW, \; L(M)=0$. Thus, replacing $\calV$ with $\calV^T$ helps us see that no generality is lost in assuming that
$C(\calH)=\{0\}$, and hence $\calH=\{0\}$. In that case, the Common Kernel Theorem readily yields the desired conclusion.

\subsection{The case $\dim \calV>nr-r+1+p-n$ for non-square matrices}\label{nonsquarestrict}

Here, we assume that $n>p$ and $\dim \calV>nr-r+1+p-n$.
Then we have $\dim \calV>(n-1)r \geq p\,r$, and hence $L(\calW) \neq \{0\}$.
If $\dim K(\calV)>r^2-r+1$, then the line of reasoning of the preceding paragraph shows that
$\calH=\{0\}$.

If we now assume that $\dim K(\calV)\leq r^2-r+1$, then, as the above line of reasoning shows that $\dim C(\calH) \leq q\,(p-r)$
and $\dim L(\calW) \leq (r-q)\,(n-r)$, we find
$$\dim \calV \leq \dim K(\calV)+\dim L(\calW)+\dim C(\calH) \leq n\,r-(n-p)\,q-r+1.$$
Since $n-p \geq 1$, combining this inequality with our assumptions on $\dim \calV$ yields that $q=0$,
and hence $\calH=\{0\}$: thus, we may yet again conclude using the Common Kernel Theorem.

\vskip 3mm
Thus, the statements on the cases of sharp inequality of dimensions in Theorems \ref{square} and \ref{rectangular}
are established. In the next section, we delve into the case of equality.

\section{Reduction to the common kernel theorem (II)}\label{reductionII}

In this section, we keep all the assumptions from Section \ref{start} but
also assume that $\dim \calV=nr-r+1+p-n$. Our goal is to prove the following facts:
\begin{itemize}
\item If $n>p$, then either $\calV$ is equivalent to a subspace satisfying the assumptions of the Common Kernel Theorem,
or it is equivalent to $\calR(1,r-1)$, or it is equivalent to $\calR(r,0)$, in which case $r=1$ or $p=n-1$;
\item If $n=p$, then either $\calV$ or $\calV^T$ is equivalent to a subspace satisfying the assumptions of the Common Kernel Theorem,
or $\calV$ is equivalent to $\calR(1,r-1)$ or $\calR(r-1,1)$, or $n=3$, $r=2$, $\K \simeq \F_2$ and $\calV$ is equivalent to 
$\calJ_3(\K):=T_3^-(\K) \cap \frak{sl}_3(\K)$.
\end{itemize}
Notice that we do not discard the case $(n,r,\card \K)=(3,2,2)$ yet. 
This will be useful in the prospect of Section \ref{M3F2section}. \\
We will need two lemmas. Afterwards, we will
finish the proof of Theorems \ref{square} and \ref{rectangular}.

\subsection{Additional lemmas}

\begin{lemme}\label{inversestab}
Let $\calY$ be a linear subspace of $\Mat_r(\K)$ such that
$\dim \calY=r^2-r+1$ and $I_r \in \calY$. Assume that there is a non-zero vector $x \in \K^r$ such that
$\Vect \{\widetilde{M} x \mid M \in \calY\} \neq \K^r$.
Then, $r>1$ and $\calY$ is similar either to $\Mat_1(\K) \vee \Mat_{r-1}(\K)$ or to $\Mat_{r-1}(\K) \vee \Mat_1(\K)$.
\end{lemme}

In some sense, this lemma can be seen as an exploration of the case of equality in Lemma \ref{geninverse}.

\begin{proof}
If $r=1$, then $\calY=\Mat_1(\K)$, which contradicts the assumption that $\Vect \{\widetilde{M} x \mid M \in \calY\} \neq \K^r$
for some non-zero vector $x$. Thus, $r>1$.

The assumptions yield two non-singular matrices $Q_1$ and $Q_2$ such that, for every $M \in \calY$,
the upper-left $(r-1) \times (r-1)$-submatrix of $Q_1MQ_2$ is singular. In other words, if we write
$M=Q_1^{-1}\begin{bmatrix}
R(M) & [?]_{(r-1) \times 1} \\
[?]_{1 \times (r-1)} & ?
\end{bmatrix}Q_2^{-1}$ with $R(M) \in \Mat_{r-1}(\K)$, then $R(\calY)$ is an $\overline{r-2}$-subspace of $\Mat_{r-1}(\K)$.
On the other hand, the rank theorem yields
$$\dim R(\calY) \geq \dim \calY-(2r-1)=(r-1)(r-2),$$
and hence Theorem \ref{FlandersdSP} shows that $R(\calY)$ is equivalent to either $\calR(0,r-2)$ or $\calR(r-2,0)$.
\begin{itemize}
\item In the first case, we have found two $1$-dimensional subspaces $D$ and $D'$ such that every $M \in \calY$
maps $D$ into $D'$. As $\calY$ contains $I_r$, one deduces that $D=D'$, which shows that $\calY$ is similar to
a linear subspace of $\Mat_1(\K) \vee \Mat_{r-1}(\K)$. As $\dim \calY=\dim \bigl(\Mat_1(\K) \vee \Mat_{r-1}(\K)\bigr)$, one deduces that $\calY$
is similar to $\Mat_1(\K) \vee \Mat_{r-1}(\K)$.
\item In the second case, the same line of reasoning yields a linear hyperplane $H$ of $\K^r$
which is stable under all the elements of $\calY$, and one deduces that $\calY$ is similar to $\Mat_{r-1}(\K) \vee \Mat_1(\K)$.
\end{itemize}
\end{proof}

\begin{lemme}\label{comatrix2lemma}
Let $n$ and $p$ be positive integers, and set $\calW:=\Mat_n(\K) \vee \Mat_p(\K)$.
Then,
$$\forall x \in \K^{n+p} \setminus (\K^n \times \{0\}), \; \Vect \bigl\{P^{-1}x \mid P \in \calW \cap \GL_{n+p}(\K)\bigr\}=\K^{n+p}$$
and
$$\forall x \in (\K^n \times \{0\}) \setminus \{0\}, \; \Vect \bigl\{P^{-1}x \mid P \in \calW \cap \GL_{n+p}(\K)\bigr\}=\K^n \times \{0\}.$$
In particular, $\K^n \times \{0\}$ is the sole non-trivial linear subspace of $\K^{n+p}$
which is stable under $\widetilde{M}$ for all $M \in \Mat_n(\K) \vee \Mat_p(\K)$.
\end{lemme}

\begin{proof}
One checks that the inverses of the invertible matrices of $\calW$ are the matrices of the form
$\begin{bmatrix}
A & [?]_{n \times p} \\
[0]_{p \times n} & B
\end{bmatrix}$, with $(A,B) \in \GL_n(\K) \times \GL_p(\K)$.
Since $\GL_n(\K)$ acts transitively on $\K^n \setminus \{0\}$, one deduces the second claimed result.
As $\GL_p(\K)$ acts transitively on $\K^p \setminus \{0\}$, we also obtain that
$\Vect\{P^{-1} e_{n+1}\mid P \in \calW \cap \GL_{n+p}(\K)\bigr\}=\K^{n+p}$, where
$e_{n+1}$ denotes the $(n+1)$-th vector of the canonical basis of $\K^{n+p}$.
Finally, given $x \in \K^{n+p} \setminus (\K^n \times \{0\})$,
we see from the above description of the inverses of the matrices of $\calW \cap \GL_{n+p}(\K)$
that there exists $Q \in \calW \cap \GL_{n+p}(\K)$ such that
$x=Q^{-1} e_{n+1}$; as $\calW \cap \GL_{n+p}(\K)$ is obviously a multiplicative group, this yields
\begin{align*}
\Vect\{  P^{-1} x\mid P \in \calW \cap \GL_{n+p}(\K)\bigr\}
& =\Vect\{  (QP)^{-1} e_{n+1}\mid P \in \calW \cap \GL_{n+p}(\K)\bigr\} \\
& =\Vect\{  R^{-1} e_{n+1}\mid R \in \calW \cap \GL_{n+p}(\K)\bigr\}=\K^{n+p}.
\end{align*}
\end{proof}

\begin{lemme}\label{lastlemma}
Let $\calY$ be an $\overline{r}$-subspace of $\Mat_{n,p}(\K)$ such that $\dim \calY=n\,r-r+1+p-n$, with $n \geq p>r > 1$.
Assume that $K(\calY)=\Mat_1(\K) \vee \Mat_{r-1}(\K)$ and, for every $C_1 \in \Mat_{n-r,r-1}(\K)$ and $L_1 \in \Mat_{1,p-r}(\K)$, that
the subspace $\calY$ contains the matrix
$$\begin{bmatrix}
0 & [0]_{1 \times (r-1)} & L_1 \\
[0]_{(r-1) \times 1} & [0]_{(r-1) \times (r-1)} & [0]_{(r-1) \times (p-r)} \\
[0]_{(n-r) \times 1} & C_1 & [0]_{(n-r) \times (p-r)}
\end{bmatrix}.$$
Then:
\begin{enumerate}[(i)]
\item Either $\calY$ is equivalent to $\calR(1,r-1)$;
\item Or $n=p=3$, $r=2$, $\K \simeq \F_2$ and
$\calY$ is equivalent to
$$\calJ_3(\K)=\Biggl\{
\begin{bmatrix}
a & 0 & 0 \\
c & b & 0 \\
d & e & a+b
\end{bmatrix} \mid (a,b,c,d,e)\in \K^5\Biggr\}.$$
\end{enumerate}
\end{lemme}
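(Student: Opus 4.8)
\emph{Normal form.} Keep the block notation of Section~\ref{start}: decompose each $M\in Y$ into a $3\times 3$ array of blocks with row-sizes $1,r-1,n-r$ and column-sizes $1,r-1,p-r$, and write $K(M)$ for its top-left $r\times r$ block. The hypothesis $K(Y)=K_r$ says that the $(2,1)$-block always vanishes while the $(1,1)$-, $(1,2)$- and $(2,2)$-blocks are free, so the rank theorem gives $\dim\bigl(\Ker(K|_Y)\bigr)=\dim Y-\dim K_r=(p-r)+(n-r)(r-1)$. The two families displayed in the statement span a subspace of $\Ker(K|_Y)$ of exactly that dimension, so $\Ker(K|_Y)$ \emph{is} their span, i.e. it consists of the matrices whose only nonzero blocks sit in positions $(1,3)$ and $(3,2)$. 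Choosing a linear section of $Y\to K_r$ whose image has vanishing $(1,3)$- and $(3,2)$-blocks, one obtains
$$Y=\Biggl\{\begin{bmatrix}\alpha & L & L_1\\ 0 & N & \beta(\alpha,L,N)\\ \psi(\alpha,L,N) & C_1 & \gamma(\alpha,L,N)\end{bmatrix}\Biggr\},$$
where $(\alpha,L,N)$ runs over $K_r$, where $L_1$ and $C_1$ are free, and where $\psi,\beta,\gamma$ are linear maps.

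\emph{The rank condition.} For a matrix of $Y$ with $N\in\GL_{r-1}(\K)$, Gaussian elimination along the invertible block $N$ shows that its rank equals $(r-1)+\rk\begin{bmatrix}\alpha & u\\ \psi(\alpha,L,N) & Z\end{bmatrix}$, where $u=L_1-LN^{-1}\beta$ ranges over all of $\Mat_{1,p-r}(\K)$ and $Z=\gamma-C_1N^{-1}\beta$ ranges over the coset $\gamma+\{D\beta\mid D\in\Mat_{n-r,r-1}(\K)\}$. Since $u$ is unconstrained, ``$\rk\le 1$'' first forces $\psi(\alpha,L,N)=0$ for every invertible $N$; as $\GL_{r-1}(\K)$ spans $\Mat_{r-1}(\K)$ and $\psi$ is linear, $\psi\equiv 0$. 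With $\psi=0$, the same inequality yields $\beta(\alpha,L,N)=\gamma(\alpha,L,N)=0$ whenever $N$ is invertible and, in addition, $\alpha\neq 0$ or $p\ge r+2$. If $p\ge r+2$, the matrices $\begin{bmatrix}\alpha & L\\ 0 & N\end{bmatrix}$ with $N$ invertible already span $K_r$, so $\beta\equiv\gamma\equiv 0$; if $p=r+1$, one needs the set $\{(\alpha,L,N)\mid\alpha\neq 0,\ N\in\GL_{r-1}(\K)\}$ to span $K_r$, and this holds precisely when the affine span of $\GL_{r-1}(\K)$ is all of $\Mat_{r-1}(\K)$ — which is true for every $(r,\K)$ except $r=2$, $\K\simeq\F_2$. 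Hence, outside that exceptional pair, $\beta\equiv\gamma\equiv 0$, and permuting the columns identifies $Y$ with $\calR(1,r-1)$.

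\emph{The exceptional pair.} Assume now $r=2$, $\K\simeq\F_2$, so $p=r+1=3$ and $n\ge 3$; here $L,N,L_1,\beta$ are scalars $\ell,\nu,\lambda,\beta$ and $C_1,\gamma,\psi$ lie in $\F_2^{n-2}$. We still have $\psi\equiv 0$, and the case $\nu=1$ of the above reduction gives $\beta(\alpha,\ell,1)=0$ and $\gamma(\alpha,\ell,1)=0$, whence by linearity $\beta=a(\alpha+\nu)$ and $\gamma=v(\alpha+\nu)$ for some $a\in\F_2$ and $v\in\F_2^{n-2}$. Evaluating the rank bound on the matrix with $\alpha=1$, $\nu=0$ while letting $C_1$ run over $\F_2^{n-2}$ forces $a=0$, and then forces $v=0$ when $n\ge 4$; in all those cases $\beta\equiv\gamma\equiv 0$ and $Y\cong\calR(1,1)$ as before. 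When $n=3$ one is left with $v=0$ (again $Y\cong\calR(1,1)$) or $v=1$, in which case $Y=\bigl\{\begin{bmatrix}\alpha&\ell&\lambda\\0&\nu&0\\0&C_1&\alpha+\nu\end{bmatrix}\bigr\}$; applying the row operation $R_3\leftarrow R_3+R_2$ and then interchanging the last two rows and the last two columns turns $Y$ into the transpose of $\calJ_3(\F_2)$, while conjugating $\calJ_3(\F_2)$ by the order-reversing permutation matrix carries it onto its transpose, so $Y\cong\calJ_3(\K)$.

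The step I expect to be the real obstacle is the degenerate range $p=r+1$: there the ``free row $u$'' argument no longer kills $\beta$ and $\gamma$ on its own, so one must replace it by the spanning property of $\{(\alpha,L,N)\mid\alpha\neq 0,\ N\in\GL_{r-1}(\K)\}$ and, when even that fails, carry out the small explicit computation in $\Mat_3(\F_2)$ that produces the $\calJ_3$ exception.
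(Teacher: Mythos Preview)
Your proof is correct and follows the same overall strategy as the paper's: write $Y$ in a normal form with linear ``error'' maps on the three non-$K_r$ blocks, show these maps vanish on enough invertible data to force them to be identically zero in the generic case, then handle the $(r,\card\K)=(2,2)$ exception by an explicit computation. The main technical difference is your choice of pivot. The paper invokes identity~\eqref{general} from Section~\ref{start}, which requires the full $r\times r$ block $P=\begin{bmatrix}\alpha & L\\ 0 & N\end{bmatrix}$ to be invertible; it then appeals to Corollary~\ref{corgen} to say that $K_r\cap\GL_r(\K)$ spans $K_r$ unless $(r,\card\K)=(2,2)$. You instead pivot on the $(r-1)\times(r-1)$ block $N$ alone, which lets you treat $\alpha=0$ and $\alpha\neq 0$ separately and exploit the freedom in $L_1$ and $C_1$ directly. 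This buys you a small refinement: $\psi$ dies immediately for every field (since $\GL_{r-1}$ always spans $\Mat_{r-1}$), and when $p\geq r+2$ you never need $\alpha\neq 0$ at all. The residual case $p=r+1$ then reduces to the affine-span question for $\GL_{r-1}(\K)$, which isolates the same exception $(r,\card\K)=(2,2)$, and your endgame computation in $\Mat_3(\F_2)$ matches the paper's. One cosmetic point: the row operation $R_3\leftarrow R_3+R_2$ in your final paragraph is harmless but actually leaves $Y$ invariant (it just relabels $C_1\mapsto C_1+\nu$); the swap of rows and columns $2,3$ alone already carries $Y$ to $\frak{sl}_3(\F_2)\cap T_3^+(\F_2)=\calJ_3(\F_2)^T$.
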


\begin{proof}
Set $\calK_r:=\Mat_1(\K) \vee \Mat_{r-1}(\K)$.
Since $\dim \calY=n\,r-r+1+p-n$ and $\dim K(\calY)=r^2-r+1$, the rank theorem and the hypotheses show that $\Ker K$ is precisely the space
of all matrices of the form
$$M_{L,C}:=\begin{bmatrix}
0 & [0]_{1 \times (r-1)} & L \\
[0]_{(r-1) \times 1} & [0]_{(r-1) \times (r-1)} & [0]_{(r-1) \times (p-r)} \\
[0]_{(n-r) \times 1} & C & [0]_{(n-r) \times (p-r)}
\end{bmatrix}$$
with $(L,C) \in \Mat_{1,p-r}(\K) \times \Mat_{n-r,r-1}(\K)$.
By the factorization lemma, this yields linear maps $\beta : \calK_r \rightarrow \Mat_{n-r,1}(\K)$,
$\gamma : \calK_r \rightarrow \Mat_{r-1,p-r}(\K)$ and $\delta : \calK_r \rightarrow \Mat_{n-r,p-r}(\K)$ such that every $M \in \calY$
splits up as
\begin{center}
\mbox{
\psfrag{M}{$M=\begin{bmatrix}
\hskip 5mm & \hskip 5mm & \hskip 5mm & \hskip 5mm & \hskip 5mm & \hskip 5mm & \hskip 5mm & \hskip 5mm \\
\hskip 5mm & \hskip 5mm & \hskip 5mm & \hskip 5mm & \hskip 5mm & \hskip 5mm & \hskip 5mm & \hskip 5mm \\
\hskip 5mm & \hskip 5mm & \hskip 5mm & \hskip 5mm & \hskip 5mm & \hskip 5mm & \hskip 5mm & \hskip 5mm \\
\hskip 5mm & \hskip 5mm & \hskip 5mm & \hskip 5mm & \hskip 5mm & \hskip 5mm & \hskip 5mm & \hskip 5mm \\
\hskip 5mm & \hskip 5mm & \hskip 5mm & \hskip 5mm & \hskip 5mm & \hskip 5mm & \hskip 5mm & \hskip 5mm \\
\hskip 5mm & \hskip 5mm & \hskip 5mm & \hskip 5mm & \hskip 5mm & \hskip 5mm & \hskip 5mm & \hskip 5mm \\
\hskip 5mm & \hskip 5mm & \hskip 5mm & \hskip 5mm & \hskip 5mm & \hskip 5mm & \hskip 5mm & \hskip 5mm
\end{bmatrix}.
$}
\psfrag{?}{$?$}
\psfrag{beta}{$\beta(K(M))$}
\psfrag{gamma}{$\gamma(K(M))$}
\psfrag{delta}{$\delta(K(M))$}
\psfrag{K}{$K(M)$}
\includegraphics{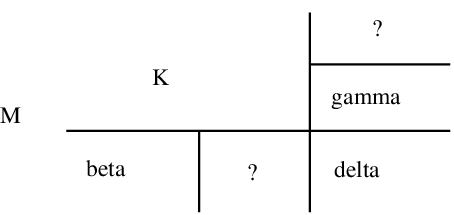}
}
\end{center}
\vskip 4mm
Now, let $P_0 \in \calK_r \cap \GL_r(\K)$, and choose $M \in \calY$ of the form
$M=\begin{bmatrix}
P_0 & C_0 \\
L_0 & \alpha_0
\end{bmatrix}$ (with the block format described in Section \ref{start}).
Then, formula \eqref{general} applied to $M$ and the matrix
$M_{L,0}$, for an arbitrary $L \in \Mat_{1,p-r}(\K)$, yields
$\delta(P_0)=L_0\,P_0^{-1}\,\begin{bmatrix}
L \\
[0]_{(r-1) \times (p-r)}
\end{bmatrix}$; taking $L=0$ shows that $\delta(P_0)=0$, and then taking a non-zero $L$ shows that
$\beta(P_0)=0$ since $P_0 \in \calK_r \cap \GL_r(\K)$. Symmetrically, applying formula \eqref{general} to $M$ and $M_{0,C}$, for an arbitrary $C \in
\Mat_{n-r,r-1}(\K)$, yields $\gamma(P_0)=0$.
We deduce that $\beta$, $\gamma$ and $\delta$ vanish everywhere on $\Vect(\calK_r \cap \GL_r(\K))$, and hence
on the whole $\calK_r$ if $(r,\card \K) \neq (2,2)$ (we may use again Corollary \ref{corgen}, although a more elementary proof can be given);
in the case $\delta$, $\beta$ and $\gamma$ are all zero, one deduces that $\calY$ is included in the space of all matrices of the form
$$\begin{bmatrix}
? & [?]_{1 \times (r-1)} & [?]_{1 \times (p-r)} \\
[0]_{(n-1) \times 1} & [?]_{(n-1) \times (r-1)} & [0]_{(n-1) \times (p-r)}
\end{bmatrix}.$$
As the dimensions of both vector spaces of matrices are equal, we deduce that this inclusion is an equality.
Permuting columns, one deduces that $\calY$ is equivalent to $\calR(1,r-1)$.
\vskip 3mm
From now on, we assume that $r=2$ and $\K=\F_2$.
Firstly, we prove that $\beta$ and $\gamma$ are zero.
Notice already that $\beta$, $\gamma$ and $\delta$ vanish on the matrices
$\begin{bmatrix}
1 & 0 \\
0 & 1
\end{bmatrix}$ and $\begin{bmatrix}
1 & 1 \\
0 & 1
\end{bmatrix}$, whose linear span is $\{M \in \calK_2 : \; \tr(M)=0\}$.
By the factorization lemma, we find three matrices
$C_0 \in \Mat_{n-2,1}(\F_2)$, $L_0 \in \Mat_{1,p-2}(\F_2)$ and $N_0 \in \Mat_{n-2,p-2}(\F_2)$ such that
$$\forall P \in \calK_2, \quad \beta(P)=(\tr P)\cdot C_0,\quad \gamma(P)=(\tr P)\cdot L_0 \quad \text{and} \quad
\delta(P)=(\tr P)\cdot N_0.$$
Applying this to the matrices $\begin{bmatrix}
1 & 0 \\
0 & 0
\end{bmatrix}$ and $\begin{bmatrix}
0 & 0 \\
0 & 1
\end{bmatrix}$
shows that, for all $(L,C)\in \Mat_{1,p-2}(\F_2) \times \Mat_{n-2,1}(\F_2)$, the space $\calY$ contains
$$\begin{bmatrix}
1 & 0 & L \\
0 & 0 & L_0 \\
C_0 & C & N_0
\end{bmatrix} \quad \text{and} \quad
\begin{bmatrix}
0 & 0 & L \\
0 & 1 & L_0 \\
C_0 & C & N_0
\end{bmatrix}.$$
If $L_0 \neq 0$, then taking $L=0$ and an arbitrary $C \neq 0$ in the first matrix
yields a matrix in $\calY$ with rank $\geq 3$, which is forbidden. Therefore, $L_0=0$. Using the same line of reasoning with the second
type of matrices, we find $C_0=0$. If $N_0=0$, then $\beta$, $\gamma$ and $\delta$ are all zero and we are done. \\
Now, we assume that $N_0 \neq 0$.
Taking $L=0$ and $C=0$ in the first type of matrices shows that $\rk N_0 \leq 1$, and hence $\rk N_0=1$.
If $n-2>1$, we may choose $L=0$ and $C \in \K^{n-2} \setminus \im N_0$ in the first type of matrices,
which yields a contradiction. Thus, $n=3$, and a similar line of reasoning shows that $p=3$.
It follows that $N_0=1$, and hence
$$\calY \subset \Biggl\{\begin{bmatrix}
a & c & d \\
0 & b & 0 \\
0 & e & a+b
\end{bmatrix} \mid (a,b,c,d,e)\in \F_2^5\Biggr\}.$$
The dimensions being equal on both sides, we deduce that the above inclusion is an equality.
Finally, by a series of obvious row and column operations, we see that $\calY$ is equivalent to the subspace
$$\Biggl\{\begin{bmatrix}
a & d & c \\
0 & a+b & e \\
0 & 0 & b
\end{bmatrix} \mid (a,b,c,d,e)\in \F_2^5\Biggr\}$$
and then to
$$\Biggl\{\begin{bmatrix}
b & 0 & 0 \\
e & a+b & 0 \\
c & d & a
\end{bmatrix} \mid (a,b,c,d,e)\in \F_2^5\Biggr\}=\calJ_3(\F_2),$$
which finishes the proof.
\end{proof}

\subsection{The case $\dim \calV=nr-r+1$ for square matrices}\label{limitsquare}

Here, we assume that $n=p$, and hence $\dim \calV=n\,r-r+1$. Once again, the assumption
$\dim K(\calV)>r^2-r+1$ would yield, using Lemma \ref{geninverse}, that $L(\calW)=\{0\}$ or $C(\calH)=\{0\}$, and in either case
we could use the Common Kernel Theorem to conclude immediately (save for the exceptional case when $(n,r,\card \K)=(3,2,2)$).
Thus, we shall assume that $L(\calW) \neq \{0\}$, $C(\calH) \neq \{0\}$ and $\dim K(\calV) \leq r^2-r+1$, in the rest of the proof.
Recall the notation $G=\sum_{N \in \calH} \im C(N)$ and $q=\dim G$.
Again, $\dim C(\calH) \leq q\,(n-r)$ and $\dim L(\calW) \leq (r-q)\,(n-r)$, and now the rank theorem shows that
\begin{align*}
nr-r+1  =\dim \calV & \leq \dim K(\calV)+\dim L(\calW)+\dim C(\calH) \\
& \leq r^2-r+1+(r-q)\,(n-r)+q\,(n-r)=nr-r+1.
\end{align*}
It follows that $\dim K(\calV)=r^2-r+1$, $\dim L(\calW)=(r-q)\,(n-r)$ and $\dim C(\calH)=q\,(n-r)$,
which in turns proves:
\begin{enumerate}[(i)]
\item That $C(\calH)$ is the set of all matrices of $\Mat_{r,n-r}(\K)$ whose image is included in $G$;
\item That $L(\calW)$ is the set of all matrices of $\Mat_{n-r,r}(\K)$ vanishing everywhere on $G$.
\end{enumerate}
Then, we deduce from identity \eqref{ultimatenil} that $K(\calV)$ satisfies the assumptions of Lemma \ref{comatrix2lemma},
and therefore $r>1$. Moreover, one sees from identity \eqref{ultimatenil} that $G$ is stable under $\widetilde{P}$ for all $P \in K(\calV)$.
Using Lemma \ref{inversestab}, one deduces that $K(\calV)$ is similar either to $\Mat_1(\K) \vee \Mat_{r-1}(\K)$ or to
$\Mat_{r-1}(\K) \vee \Mat_1(\K)$ and, in any case, Lemma \ref{comatrix2lemma} shows that $G$ is the sole non-trivial linear subspace of $\K^r$
which is stable under $\widetilde{P}$ for all $P \in K(\calV)$. Thus, we have $q=1$ in the first case, and $q=r-1$ in the second one.

Assume that $q=1$, so that $K(\calV)$ is similar to $\Mat_1(\K) \vee \Mat_{r-1}(\K)$.
Then, we see that no generality is lost in assuming that
$G=\K \times \{0\}$ and $K(\calV)=\Mat_1(\K) \vee \Mat_{r-1}(\K)$
(since none of the previous assumptions is modifying in replacing $\calV$ with $(P \oplus I_{n-r}) \calV (P \oplus I_{n-r})^{-1}$
for some $P \in \GL_r(\K)$).
Now, let us define $\calH'$ as the linear subspace of $\calW$ consisting of its matrices which have the form
$\begin{bmatrix}
[0]_{r\times r} & [0]_{r \times (n-r)} \\
[?]_{(n-r) \times r} & [?]_{(n-r) \times (n-r)}
\end{bmatrix}$. If $\calH'=\{0\}$, then the Common Kernel Theorem applied to
$\calV^T$ shows again that $\calV$ is equivalent to a subspace of $\calR(r,0)$. \\
Assume that $\calH' \neq \{0\}$. Then, applying the previous line of reasoning to $L(\calH')$ and $C(\calW)$ shows that
there is a non-trivial linear subspace $G'$ of $\K^r$ which is stable under $\widetilde{P}$ for all $P \in K(\calV)$
and such that $L(\calH')=\bigl\{M \in \Mat_{n-r,r}(\K) : \; G' \subset \Ker M\bigr\}$
and $C(\calW)=\bigl\{M \in \Mat_{r,n-r}(\K) : \; \im M \subset G'\bigr\}$.
Therefore, $G'=G=\K \times \{0\}$. Thus $L(\calW)=L(\calH')$, which leads to $\calW=\calH+\calH'$.
Let $L \in \Mat_{n-r,r-1}(\K)$ and $C \in \Mat_{1,n-r}(\K)$.
Then, $\calV$ contains a matrix of the form
$$M=\begin{bmatrix}
0 & [0]_{1 \times (r-1)} & C \\
[0]_{(r-1) \times 1} & [0]_{(r-1) \times (r-1)} & [0]_{(r-1) \times (n-r)} \\
[0]_{(n-r) \times 1} & L & N
\end{bmatrix} \quad \text{for some $N \in \Mat_{n-r}(\K)$.}$$
Now, remember that $\calV$ contains $J_r=I_r \oplus 0_{n-r}$. Applying identity \eqref{general} to $J_r$ and $M$ yields $N=0$.
It follows that $\calV$ satisfies the assumptions of Lemma \ref{lastlemma}, and hence either $\calV$ is equivalent to $\calR(1,r-1)$,
or $(n,r,\# \K)=(3,2,2)$ and $\calV$ is equivalent to $\calR(1,1)$ or to $\calJ_3(\K)$.

Assume finally that $q=r-1$ and $q>1$ (which discards the special case when $(n,r,\# \K)=(3,2,2)$). Then,
the above line of reasoning shows that we may assume that $\calW=\calH+\calH'$,
with $L(\calH')=\bigl\{M \in \Mat_{n-r,r}(\K) : \; \K^{r-1} \times \{0\} \subset \Ker M\bigr\}$
and $C(\calH)=\bigl\{M \in \Mat_{r,n-r}(\K) : \; \im M \subset \K^{r-1} \times \{0\}\bigr\}$.
As $(\Mat_{r-1}(\K) \vee \Mat_1(\K))^T$ is similar to $\Mat_1(\K) \vee \Mat_{r-1}(\K)$, we see that, for
$\calV':=\calV^T$, we now have $q'=1$, which brings us back to the previous situation.
In that case, we deduce that $\calV$ is equivalent to $\calR(1,r-1)^T=\calR(r-1,1)$.

This completes the proof of Theorem \ref{square}.

\subsection{The case $\dim \calV=nr-r+1+p-n$ for non-square matrices}\label{limitnonsquare}

Here, we assume that $n>p$.
If $q=0$, then $\calH=\{0\}$ and we are reduced to the situation of the Common Kernel Theorem.
Now, we assume that $q \geq 1$.
Again, we denote by $\calH'$ the linear subspace of $\calV$ consisting of its matrices $M$ for which
$K(M)=0$ and $C(M)=0$, with the block formats from Paragraph \ref{start}.

Assume that $L(\calH')=\{0\}$.
Then, $\calH'=\{0\}$ and $\dim \calV \leq p\,r$. However,
$n\,r+(p-n)-r+1-r\,p=(r-1)(n-p-1)\geq 0$, which leads to $r=1$, $n=p+1$ and $\dim \calV=p\,r$.
The Common Kernel Theorem applies to $\calV^T$, which shows that $\calV$ is equivalent to $\calR(r,0)$.
\begin{center}
From now on, we assume that $L(\calH') \neq \{0\}$.
\end{center}
If $\dim K(\calV)>r^2-r+1$, then Lemma \ref{geninverse} yields $L(\calW)=\{0\}$
(remember that $q>0$), and we deduce that $L(\calH')=\{0\}$, which contradicts our assumptions.
Therefore, $\dim K(\calV) \leq r^2-r+1$.
Remembering the inequalities
$$\dim C(\calH) \leq q\,(p-r) \quad \text{and} \quad \dim L(\calW) \leq (r-q)\,(n-r),$$
the rank theorem shows that
$$\dim \calV \leq r^2-r+1+\dim C(\calH)+\dim L(\calW) \leq n\,r+(p-n)\,q-r+1,$$
which in turn yields $q=1$. From there, we may use the same line of reasoning
as in Paragraph \ref{limitsquare}:
\begin{itemize}
\item One shows that $K(\calV)$ satisfies the assumptions of Lemma \ref{inversestab}, which entails that
$K(\calV)$ is similar to $\Mat_1(\K) \vee \Mat_{r-1}(\K)$ and $r>1$; no generality is then lost in assuming that
$K(\calV)=\Mat_1(\K) \vee \Mat_{r-1}(\K)$, which entails that $G=\K \times \{0\}$.
\item Then, one finds $C(\calH)=\bigl\{M \in \Mat_{r,p-r}(\K) : \; \im M \subset \K \times \{0\}\bigr\}$
and $L(\calW)=\bigl\{M \in \Mat_{n-r,r}(\K) : \; \K \times \{0\} \subset \Ker M\bigr\}$
\item Using $L(\calH')\neq \{0\}$ and $L(\calH') \subset L(\calW)$, together with similar dimensional arguments as above, one finds
$L(\calH')=\bigl\{M \in \Mat_{n-r,r}(\K) : \; \K \times \{0\} \subset \Ker M\bigr\}$.
\item One deduces that $\calW=\calH+\calH'$, and then one proves that $\calV$ satisfies all the assumptions of Lemma \ref{lastlemma}.
\end{itemize}
As the assumption $n>p$ discards the exceptional case in Lemma \ref{lastlemma}, one deduces that
$\calV$ is equivalent to $\calR(1,r-1)$. This completes the proof of Theorem \ref{rectangular}.

\section{The case of $\Mat_3(\F_2)$}\label{M3F2section}

Here, we seek to classify all the $5$-dimensional $\overline{2}$-subspaces of $\Mat_3(\F_2)$,
as stated in Theorem \ref{M3F2}. Let $\calV$ be such a subspace.
Notice that the proofs from Section \ref{reductionII} show that
either $\calV$ satisfies the assumptions of the Common Kernel Theorem,
or $\calV^T$ satisfies them, or $\calV$ is equivalent to $\calJ_3(\F_2)$. \\
Notice also that $\calJ_3(\F_2)$ and $\calJ_3(\F_2)^T$ are actually equivalent, and even similar:
this is easily seen by remarking that
$\calJ_3(\F_2)^T=\frak{sl}_3(\F_2) \cap T_3^+(\F_2)$ and by conjugating $\calJ_3(\F_2)^T$ with the permutation matrix
$\begin{bmatrix}
0 & 0 & 1 \\
0 & 1 & 0 \\
1 & 0 & 0
\end{bmatrix}$.
Thus, it suffices to prove the following proposition, which describes the exceptional cases in the Common Kernel Theorem:

\begin{prop}\label{noncomkerM3F2}
Let $\calW$ be a linear subspace of
$\Mat_{3,2}(\F_2)$ such that $\dim \calW=5$. Let
$\varphi : \calW \rightarrow \Mat_{3,1}(\F_2)$ be a linear map.
Assume that
$$\calV:=\Bigl\{\begin{bmatrix}
M & \varphi(M)
\end{bmatrix}\mid M \in \calW\Bigr\}.$$
is a $\overline{2}$-subspace of $\Mat_3(\F_2)$.
Then, either $\calV$ is equivalent to a subspace of $\calR(0,2)$ or it is equivalent to $\calJ_3(\F_2)$.
\end{prop}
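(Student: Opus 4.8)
The plan is to exploit the very small ambient setup ($3\times 3$ matrices over $\F_2$, with $W\subset\Mat_{3,2}(\F_2)$ of dimension $5$, hence codimension $1$) and to track the situation exactly as in the proof of the Common Kernel Theorem, keeping the exceptional pair $(3,2,2)$ in play rather than discarding it. First I would note that $\dim W=5=nr-n+2$ with $n=3$, $r=2$, so Proposition \ref{comstep1} still applies: its proof only needed Corollary \ref{corgen} applied to a subspace $H\subset\Mat_{2,2}(\F_2)$ of dimension $\geq (n-1)(r-1)+2=3$, and a dimension-$3$ subspace of $\Mat_2(\F_2)$ is indeed spanned by its rank-$2$ matrices unless it is the exceptional $T_2^+(\F_2)$. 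So the real work is to deal with the one bad shape of $H$. I would therefore split according to whether, for \emph{every} hyperplane $\calH$ of $\F_2^3$, the space $\{M\in V:\im M\subset\calH\}$ maps into $\calH$ under the truncation — if so, Proposition \ref{comstep1} goes through verbatim and $\rk[M\ \varphi(M)]=\rk M$ for all $M$, whence Corollary \ref{comstep2} gives that $V$ is equivalent to a subspace of $\calR(0,2)$ and we are done.

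The interesting branch is when this fails for some hyperplane $\calH$; conjugating, we may take $\calH=\F_2^2\times\{0\}$, so $W':=\{M\in W:\ \im M\subset\F_2^2\times\{0\}\}$ has a member $M_0$ with $\varphi(M_0)\notin\F_2^2\times\{0\}$. Identifying $W'$ with a subspace $H$ of $\Mat_{2,2}(\F_2)$ via $N\mapsto\begin{bmatrix}N\\0\end{bmatrix}$ and writing $\varphi\bigl(\begin{bmatrix}N\\0\end{bmatrix}\bigr)=\begin{bmatrix}C(N)\\\alpha(N)\end{bmatrix}$ as in the proof of Proposition \ref{comstep1}, the rank constraint forces $\alpha(N)=0$ whenever $\rk N=2$; since $\alpha\neq0$, the failure pins down $H$ to be precisely the exceptional subspace, i.e.\ $\dim H=3$ and $H$ is equivalent (under left/right multiplication fixing the block structure) to $T_2^+(\F_2)$ — the only $3$-dimensional subspace of $\Mat_2(\F_2)$ not spanned by its invertible matrices. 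At this point the remaining degrees of freedom are very few: $\dim W'=3$, so $W'$ accounts for $3$ of the $5$ dimensions of $W$, and the quotient $W/W'\cong K(W)\subset\F_2^2$ (first-column space — here I mean the space obtained after the reduction, i.e.\ the obvious rank-theorem complement) is $2$-dimensional. I would then run the same hyperplane argument with the two columns of $W$ interchanged (equivalently pass to a suitable $V^T$-type manipulation), which is legitimate because the hypotheses are symmetric enough; either that also yields the exceptional shape, which will confine $V$ to one specific orbit, or it yields the common-kernel conclusion on that side.

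The endgame is then a finite check. Once $W'$ is normalized to the $T_2^+$-shape and $\varphi$ is normalized on it (using that $\alpha$ is the trace-type functional vanishing on the two invertible elements, exactly as in the $r=2$, $\K=\F_2$ portion of Lemma \ref{lastlemma}), the matrix $V$ is forced into the form
$$V=\Biggl\{\begin{bmatrix}a & c & d\\ 0 & b & e\\ 0 & 0 & ?\end{bmatrix}\ \Big|\ \dots\Biggr\}$$
with the $(3,3)$ entry a fixed linear function of the parameters; the rank-$\leq 2$ condition together with $\dim V=5$ determines this function up to equivalence, and a short sequence of row and column operations — the same ones used at the end of Lemma \ref{lastlemma} to pass to $\calJ_3(\F_2)$ — identifies $V$ with $\calJ_3(\F_2)$. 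The main obstacle is the middle step: showing that the failure of Proposition \ref{comstep1}'s argument \emph{forces} $H$ to be exactly the $T_2^+(\F_2)$-shape rather than merely some $3$-dimensional subspace, and then that this shape, once combined with the global rank constraint on all of $V$ (not just on $W'$), leaves only the single orbit of $\calJ_3(\F_2)$. Everything else is bookkeeping with $2\times2$ and $3\times3$ blocks over $\F_2$, and dimension counts already carried out in Sections \ref{commonkersection} and \ref{reductionII}.
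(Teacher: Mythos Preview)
Your dichotomy is sound and the first branch is correctly handled: if $\im\varphi(M)\subset\im M$ for all $M\in W$, then Theorem \ref{reprtheorem} (which carries no field restriction) yields $\varphi(M)=MC$ for some $C\in\Mat_{2,1}(\F_2)$, and $V$ is equivalent to a subspace of $\calR(0,2)$. The split is in fact cleaner than you indicate: any nonzero common kernel vector for $V$ must have nonzero third coordinate (otherwise $W$ itself would kill a nonzero vector of $\F_2^2$, impossible since $\dim W=5>3$), and then $\varphi(M)=Mc$ for the resulting $c\in\F_2^2$; so your second branch is \emph{precisely} the case where $V$ is not equivalent to any subspace of $\calR(0,2)$, and there you must produce $\calJ_3(\F_2)$ explicitly. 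That is where the gap lies. The normal form you display has its first two columns spanning only a $3$-dimensional space, contradicting $\dim W=5$: the graph structure of $V$ over $W$ has been lost in your normalization. The appeal to ``interchanging the two columns of $W$'' or a ``$V^T$-type manipulation'' is not a valid symmetry of the setup (swapping columns of $W$ is a mere column operation and changes nothing, while $V^T$ is not in general a graph over its first two columns). Concretely, after fixing one exceptional hyperplane as $\{x_3=0\}$ with $H=T_2^+(\F_2)$, the generator $B$ of $W^\bot$ is only pinned down up to two free parameters in its third row, and most of $\varphi$ remains unconstrained by what you have written; the promised ``finite check'' is a genuine case analysis that you have not carried out, and the reference to Lemma \ref{lastlemma} does not apply since that lemma starts from an entirely different normalization ($K(Y)=K_r$ inside the full $\calR$-framework, not a graph over $W$).

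The paper's route is different and more direct: it normalizes the single nonzero matrix $B$ spanning $W^\bot\subset\Mat_{3,2}(\F_2)$ according to $\rk B\in\{1,2\}$. When $\rk B=2$, one has $W=\bigl\{\begin{bmatrix}N\\L\end{bmatrix}:N\in\frak{sl}_2(\F_2),\ L\in\Mat_{1,2}(\F_2)\bigr\}$; a determinant identity followed by polarization forces the relevant pieces of $\varphi$ to vanish after a column operation, giving $V\subset\calR(0,2)$. When $\rk B=1$, $W$ becomes the explicit hyperplane $\{m_{1,2}=0\}$, and a short analysis of a rank-$\le 1$ two-dimensional affine subspace of $\Mat_2(\F_2)$ yields either $\calR(0,2)$ or $\calJ_3(\F_2)$. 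This $\rk B$ dichotomy accomplishes cleanly what your undeveloped endgame was meant to do.
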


Note that $\calW^\bot$ contains only one non-zero matrix $B$. Using a series of row and column operations,
we see that no generality is lost in assuming that $B=\begin{bmatrix}
0 & 1 \\
0 & 0 \\
0 & 0
\end{bmatrix}$ or
$B=\begin{bmatrix}
1 & 0 \\
0 & 1 \\
0 & 0
\end{bmatrix}$. We will deal with these two cases separately
(in the second one, we will show that $\calV$ is equivalent to a subspace of $\calR(0,2)$).

\subsection{The case $\rk B=1$}

We assume that $B=\begin{bmatrix}
0 & 1 \\
0 & 0 \\
0 & 0
\end{bmatrix}$, whence
$$\calW=\Biggl\{\begin{bmatrix}
a & 0 \\
b & d \\
c & e
\end{bmatrix}\mid  (a,b,c,d,e)\in \F_2^5\Biggr\}.$$
Since $\Mat_2(\F_2)$ is spanned by $\GL_2(\F_2)$, the line of reasoning from the proof of Proposition \ref{comstep1}
shows that the first entry of $\varphi(M)$ only depends on the entry of $M$ at the $(1,1)$-spot;
thus, either
$\forall M \in \calW, \; \varphi(M)=\begin{bmatrix}
0 \\
? \\
?
\end{bmatrix}$ or
$\forall M \in \calW, \; \varphi(M)=\begin{bmatrix}
m_{1,1} \\
? \\
?
\end{bmatrix}$. In the latter case, the column operation $C_3 \leftarrow C_3+C_1$ shows that we lose no generality in assuming that
every matrix of $\calV$ splits up as
$$\begin{bmatrix}
a & 0 & 0 \\
b & d & ? \\
c & e & ?
\end{bmatrix}.$$
Now, we write every matrix of $\calV$ as
$$M=\begin{bmatrix}
m_{1,1} & [0]_{1 \times 2} \\
[?]_{2 \times 1} & K(M)
\end{bmatrix} \quad \text{with $K(M)\in \Mat_2(\F_2)$.}$$
Then,
$$\calY:=\bigl\{K(M) \mid M \in \calV\; \text{such that}\; m_{1,1}=1\bigl\}$$
is an affine subspace of $\Mat_2(\K)$ with rank $\leq 1$ and translation vector space
$$Y=\bigl\{K(M) \mid M \in \calV\; \text{such that}\; m_{1,1}=0\bigl\}.$$
Note that, for all $(d,e)\in \K^2$, the vector space $Y$ contains a matrix of the form
$\begin{bmatrix}
d & ? \\
e & ?
\end{bmatrix}$, and hence $Y$ is inequivalent to $\calR(1,0)$ and $\dim \calY \geq 2$.

By Dieudonn\'e's theorem for affine spaces of singular matrices \cite{Dieudonne}, there are three possibilities for $\calY$:
\begin{itemize}
\item Either $\calY$ is equivalent to $\calR(1,0)$: this is impossible, as it would imply that $Y=\calY$ is equivalent to $\calR(1,0)$.
\item Or all the matrices of $\calY$ vanish at some common non-zero vector $z\in \K^2$; then,
all the matrices of $Y$ also vanish at $z$; one deduces a non-zero vector of $\K^3$ at which
all the matrices of $\calV$ vanish, and hence $\calV$ is equivalent to a subspace of $\calR(0,2)$.

\item Or $\calY$ is equivalent to the affine space of all matrices of the form
$\begin{bmatrix}
x & y \\
0 & x+1
\end{bmatrix}$ with $(x,y)\in \K^2$. Using row and column operations on the second and third rows and on the second and third columns
of the matrices of $\calV$, we see that no generality is lost in actually assuming that
$$\calY=\biggl\{\begin{bmatrix}
x & 0 \\
y & x+1
\end{bmatrix} \mid (x,y)\in \K^2\biggr\}.$$
Then, one deduces that
$$Y=\biggl\{\begin{bmatrix}
x & 0 \\
y & x
\end{bmatrix} \mid (x,y)\in \K^2\biggr\},$$
and one concludes that every matrix of $\calV$ has the form
$$M=\begin{bmatrix}
m_{1,1} & 0 & 0 \\
? & m_{2,2} & 0 \\
? & ? & m_{2,2}+m_{1,1}
\end{bmatrix}.$$
Thus, $\calV \subset \calJ_3(\F_2)$, and the equality of dimensions yields $\calV=\calJ_3(\F_2)$.
\end{itemize}

This completes our study of the case when $\rk B=1$.

\subsection{The case $\rk B=2$}

Here, we assume that $B=\begin{bmatrix}
1 & 0 \\
0 & 1 \\
0 & 0
\end{bmatrix}$, to the effect that
$$\calW=\Biggl\{\begin{bmatrix}
N \\
L
\end{bmatrix}\mid N \in \frak{sl}_2(\F_2), \; L \in \Mat_{1,2}(\F_2)\Biggr\}.$$
Noticing that $\frak{sl}_2(\F_2)$ is spanned by its non-singular matrices, we may use the arguments from the proof
of Proposition \ref{comstep1} to see that the third entry of $\varphi(M)$ depends only on the last row of $M$.
Then, we may write
$\varphi \begin{bmatrix}
N \\
L
\end{bmatrix}=\begin{bmatrix}
? \\
\delta(L)
\end{bmatrix}$ for some linear form $\delta : \Mat_{1,2}(\F_2) \rightarrow \F_2$.
Using the column operation $C_3 \leftarrow C_3+a\,C_1+b\,C_2$ for some well-chosen $(a,b)\in \F_2^2$, we may reduce the situation to the one
where every matrix of $\calV$ has entry zero at the $(3,3)$-spot.
It follows that:
\begin{itemize}
\item For every $N \in \frak{sl}_2(\F_2)$, there is a unique $\gamma(N) \in \Mat_{2,1}(\F_2)$ such that $\calV$
contains the matrix
$\begin{bmatrix}
N & \gamma(N)\\
0 & 0
\end{bmatrix}$;
\item For every $L \in \Mat_{1,2}(\F_2)$, there is a unique $\alpha(L) \in \Mat_{2,1}(\F_2)$ such that $\calV$ contains the matrix
$\begin{bmatrix}
[0]_{2 \times 2} & \alpha(L)\\
L & 0
\end{bmatrix}$.
\end{itemize}
Let $N \in \frak{sl}_2(\F_2)$ and $L \in \Mat_{1,2}(\F_2)$. Identity \eqref{comatrixformula} yields
$$L\,\widetilde{N}\,\bigl(\gamma(N)+\alpha(L)\bigr)=0.$$
Since $N \in \frak{sl}_2(\F_2)$, we actually have
$\widetilde{N}=N$, whence
\begin{equation}\label{lasteq}
\forall (N,L) \in \frak{sl}_2(\F_2) \times \Mat_{1,2}(\F_2), \quad LN\gamma(N)=LN\alpha(L).
\end{equation}
With $L$ fixed, notice that the left hand-side of \eqref{lasteq} is a quadratic form of $N$, and the right hand-side
is a linear form. We deduce that
$$\forall (M,N,L) \in \frak{sl}_2(\F_2)^2 \times \Mat_{1,2}(\F_2), \; L\,(M\gamma(N)+N\gamma(M))=0,$$
and hence
\begin{equation}\label{reallasteq}
\forall (M,N) \in \frak{sl}_2(\F_2)^2, \; M\gamma(N)+N\gamma(M)=0.
\end{equation}
Notice that $\Ker \gamma \neq \{0\}$, by the rank theorem.

Assume that $\Ker \gamma$ contains only singular matrices. Then, $\dim \Ker \gamma=1$ as the three
rank $1$ matrices of $\frak{sl}_2(\F_2)$ are linearly independent. It follows that $\gamma$ maps $\frak{sl}_2(\F_2)$ onto $\Mat_{2,1}(\F_2)$.
However, choosing $M_0 \in \Ker \gamma \setminus \{0\}$, we find that
$\forall N \in \frak{sl}_2(\F_2), \; M_0\,\gamma(N)=0$, and hence $M_0=0$, a contradiction.

We deduce that $\Ker \gamma$ contains a non-singular matrix $M_0$.
Then, \eqref{reallasteq} entails $M_0\,\gamma(N)=0$ for every $N \in \frak{sl}_2(\F_2)$, which yields $\gamma=0$.

It follows that, given an arbitrary $L \in \Mat_{1,2}(\F_2) \setminus \{0\}$, one has $LN\alpha(L)=0$ for every $N \in \frak{sl}_2(\F_2)$;
checking that $\Vect(L\frak{sl}_2(\F_2))=\Mat_{1,2}(\F_2)$, we deduce that $\alpha(L)=0$.

We conclude that $\alpha=0$ and $\gamma=0$, which yields $\calV \subset \calR(0,2)$.
This finishes the proof of Proposition \ref{noncomkerM3F2}, and Theorem \ref{M3F2} ensues.

\subsection{A final remark}

Using the previous results, the reader will easily prove the following generalization of Proposition \ref{noncomkerM3F2},
which fully describes the exceptional case in the Common Kernel Theorem:

\begin{prop}
Let $\calW$ be a linear subspace of
$\Mat_{3,2}(\F_2)$ such that $\dim \calW=5$. Let $p \geq 3$, and
$\varphi : \calW \rightarrow \Mat_{3,p-2}(\F_2)$ be a linear map.
Assume that
$$\calV=\Bigl\{\begin{bmatrix}
M & \varphi(M)
\end{bmatrix}\mid M \in \calW\Bigr\}$$
is a $\overline{2}$-subspace of $\Mat_{3,p}(\F_2)$. Then, either $\calV$ is
equivalent to a subspace of $\calR(0,2)$, or it is equivalent to the vector space
$$\Bigl\{\begin{bmatrix}
M & [0]_{3 \times (p-3)}
\end{bmatrix} \mid M \in \calJ_3(\F_2)\Bigr\} \subset \Mat_{3,p}(\F_2).$$
\end{prop}

\section{The second classification theorem}

This last section consists in a proof of Theorem \ref{class2theorem}.
In this prospect, we shall need a classical lemma of Flanders on $\overline{r}$-spaces of matrices,
along with inverse transitivity results of the same flavor as Lemma \ref{geninverse}.
The first two paragraphs are devoted to such lemmas.
Then, we shall proof Theorem \ref{class2theorem} by relying upon the Atkinson-Lloyd-Beasley classification theorems:
the proof will be based upon block-decompositions, with a method
that has similarities with the one of Section \ref{start} but is closer to Flanders's original line of reasoning.

Key to the proof is the reduction to the following situation:

\begin{Def}
Let $n$ and $p$ be positive integers, and $r$ be a positive integer with $r<\min(n,p)$.
A linear subspace $\calV$ of $\Mat_{n,p}(\K)$ has \textbf{property $(\calP_r)$} when there is a linear subspace
$\calW$ of $\Mat_r(\K)$ such that:
\begin{enumerate}[(i)]
\item $\calW$ contains $I_r$.
\item $\dim \calW \geq r^2-2r+4$ (note that this implies $r \geq 2$).
\item For every $N \in \calW$, the space $\calV$ contains $\begin{bmatrix}
N & [0]_{r \times (p-r)} \\
[0]_{(n-r) \times r} & [0]_{(n-r) \times (p-r)}
\end{bmatrix}$.
\end{enumerate}
\end{Def}

Here is our key result, which will be proved in Paragraph \ref{superRreducedsection}:

\begin{prop}\label{superRreduced}
Let $n$ and $p$ be positive integers, and $r$ be a positive integer with $r<\min(n,p)$.
Let $\calV$ be an $\overline{r}$-subspace of $\Mat_{n,p}(\K)$ with property $(\calP_r)$.
Then, $\calV$ is equivalent to a subspace of one of the spaces $\calR(i,r-i)$ or
$\calR(r-i,i)$, for some $i \in \{0,1,2\}$.
\end{prop}

\subsection{Basic lemmas}

The following two lemmas are standard tools in the analysis of
$\overline{r}$-spaces of matrices:

\begin{lemme}[Flanders, \cite{Flanders} Lemma 1]\label{Flanderslemma}
Let $\calV$ be an $\overline{r}$-subspace of $\Mat_{n,p}(\K)$ which contains $I_r \oplus 0$.
Assume that $\# \K>r$.
Then, every matrix of $\calV$ splits up as
$$M=\begin{bmatrix}
[?]_{r \times r} & C(M) \\
L(M) & [0]_{(n-r) \times (p-r)}
\end{bmatrix},$$
where $L(M)$ and $C(M)$ are $(n-r) \times r$ and $r \times (p-r)$ matrices, respectively, that satisfy $L(M)C(M)=0$.
\end{lemme}

\begin{lemme}\label{ranksplittinglemma}
Let $\calV$ be an $\overline{r}$-subspace of $\Mat_{n,p}(\K)$, and let $(s,t) \in \lcro 1,n-1\rcro \times \lcro 1,p-1\rcro$.
Assume that every matrix of $\calV$ splits up as
$$M=\begin{bmatrix}
[?]_{s \times t} & C(M) \\
L(M) & [0]_{(n-s) \times (p-t)}
\end{bmatrix}.$$
If $\# \K>r$, then $\rk L(\calV)+\rk C(\calV) \leq r$.
\end{lemme}

Lemma \ref{ranksplittinglemma} is proved in the course of the proof of Lemma 4 of \cite{AtkLloyd}. \\
We finish with a lemma that will be used in the proof of Proposition \ref{superRreduced}:

\begin{lemme}\label{uniquestablesubspace}
Let $\calW$ be a linear subspace of $\Mat_r(\K)$ such that $\dim \calW \geq r^2-2r+4$
(to the effect that $r \geq 2$).
Then, there is at most one non-trivial subspace of $\K^r$ that is stable under all the elements of $\calW$.
\end{lemme}

\begin{proof}
Let $H_1$ and $H_2$ be two distinct non-trivial linear subspaces of $\K^r$ that are stable under all the elements of $\calW$.
Denote by $p$ and $q$ their respective dimensions.
Assume first that $H_1 \cap H_2=\{0\}$. Then, $\calW$ is equivalent to a linear subspace
of $\bigl(\Mat_p(\K) \oplus \Mat_q(\K)\bigr)\vee \Mat_{r-p-q}(\K)$, so that
$$\codim_{\Mat_r(\K)} \calW \geq (r-p)p+(r-q)q  \geq 2(r-1).$$
Here, we have used the fact that the function $t \mapsto (r-t)t$ is increasing on $\bigl[0,\frac{r}{2}\bigr]$ and symmetric around 
$\frac{r}{2}$, to the effect that its minimal value on $[1,r-1]$ is $r-1$. 

If we now assume that $H_1 \cap H_2 \neq \{0\}$, we see that no generality is lost in assuming that $H_1 \subsetneq H_2$
(as $H_1 \cap H_2$ is a proper subspace of either $H_1$ or $H_2$, and it is stable under all the elements of $\calW$).
In that case, we see that $\calW$ is equivalent to a subspace of $\Mat_p(\K) \vee \Mat_{q-p}(\K) \vee \Mat_{r-q}(\K)$, and hence
$$\codim_{\Mat_r(\K)} \calW \geq (r-p)p+(r-q)(q-p) \geq (r-p)p+(r-p-1) \geq 2r-3,$$
where we have used the fact that the function $t \mapsto (r-t)(t-p)$ is increasing on
$\bigl[p,\frac{r+p}{2}\bigr]$ and symmetric around $\frac{r+p}{2}$ (to the effect that its minimal value on
$\lcro p+1,r-1\rcro$ is obtained for $t=p+1$), while the function
$s \mapsto (r-s)s+(r-s-1)=(r-s)(s+1)-1$ is increasing on $\bigl[0,\frac{r-1}{2}\bigr]$ and symmetric around $\frac{r-1}{2}$
(to the effect that its minimal value on $\lcro 1,r-2\rcro$ is obtained for $s=1$).

In any case, we have contradicted the assumption that $\dim \calW \geq r^2-2r+4$.
\end{proof}

\subsection{Additional inverse transitivity lemma}

\begin{lemme}\label{comatrix1lemma}
Let $\calW$ be an affine subspace of $\Mat_1(\K) \vee \Mat_{r-1}(\K)$ such that
$\dim \calW \geq r^2-2r+4$. Assume furthermore that $\calW$ contains a non-singular matrix.
Then,
$$\forall x \in \K^r \setminus (\K \times \{0\}), \; \Vect \bigl\{P^{-1}x \mid P \in \GL_r(\K) \cap \calW\bigr\}=\K^r.$$
\end{lemme}

\begin{proof}
Assume on the contrary that there is a linear hyperplane $H$ of $\K^r$ and a vector $x \in \K^r \setminus (\K \times \{0\})$
such that $P^{-1} x\in H$ for all $P \in \GL_r(\K) \cap \calW$.
Denote by $(e_1,\dots,e_r)$ the canonical basis of $\K^r$.
As the situation is unchanged by multiplying $\calW$ on the left and on the right with non-singular matrices which belong to
$\Mat_1(\K) \vee \Mat_{r-1}(\K)$, we deduce that no generality is lost in assuming that $x=e_2$ and that either
$H=\Vect(e_2,\dots,e_r)$ or $H=\Vect(e_1,\dots,e_{r-1})$ (this corresponds to the situations where $e_1 \not\in H$ and $e_1 \in H$, respectively).
In the first case (respectively, in the second one), this means that the entry of $P^{-1}$ at the $(1,2)$-spot
(respectively, at the $(r,2)$-spot) is zero for all $P \in \calW \cap \GL_r(\K)$.

As $\calW$ contains an invertible matrix and is included in $\Mat_1(\K) \vee \Mat_{r-1}(\K)$, the set of
matrices $M \in \calW$ such that $m_{1,1}=1$ is an affine subspace $\calH$ of dimension at least $r^2-2r+3$.
For $M \in \calH$, we write
$$M=\begin{bmatrix}
1 & L(M) \\
[0]_{(r-1) \times 1} & Q(M)
\end{bmatrix} \quad \text{with $L(M) \in \Mat_{1,r-1}(\K)$ and $Q(M) \in \Mat_{r-1}(\K)$.}$$
Then, the rank theorem shows that $Q(\calH)$ is an affine subspace of $\Mat_{r-1}(\K)$ and
$$\dim Q(\calH)>(r-1)^2-(r-1)+1.$$
Applying Lemma \ref{inversestab}, we find some $M \in \calH$ for which $Q(M)$ is invertible and $Q(M)^{-1}$ has a non-zero entry at the
$(r-1,1)$-spot, which discards the second case above. Therefore, the inverse of every invertible matrix of $\calW$ has entry $0$ at the $(1,2)$-spot.

Finally, by the rank theorem, one finds that the translation vector space of $\calW$ contains a non-zero matrix of the form
$$M_0=\begin{bmatrix}
0 & L_0 \\
[0]_{(r-1) \times 1} & [0]_{(r-1) \times (r-1)}
\end{bmatrix} \quad \text{with $L_0 \in \Mat_{1,r-1}(\K)$.}$$
Let $M \in \calH$ be such that $Q(M)$ is invertible. For every $\lambda \in \K$, we find that $\lambda M_0+M$ is invertible and
$$(\lambda M_0+M)^{-1}=\begin{bmatrix}
1 & -\bigl(\lambda L_0+L(M)\bigr) Q(M)^{-1} \\
[0]_{(r-1) \times 1} & Q(M)^{-1}
\end{bmatrix}.$$
Thus, the first entry of the row matrix $\bigl(\lambda L_0+L(M)\bigr) Q(M)^{-1}$ must be zero, and, varying $\lambda$, one deduces that
the first entry of $L_0 Q(M)^{-1}$ is zero. In other words, with $C_0:=\begin{bmatrix}
1 & 0 & \cdots & 0
\end{bmatrix}^T$, we have $L_0 R^{-1} C_0=0$ for every non-singular matrix $R$ of $Q(\calH)$. Then, Lemma \ref{geninverse}
yields a contradiction.
\end{proof}

\subsection{The structure of spaces with property $(\calP_r)$}\label{superRreducedsection}

Here, we prove Proposition \ref{superRreduced}.
Let $\calV$ be an $\overline{r}$-subspace of $\Mat_{n,p}(\K)$, and assume that there is a subspace $\calW$ of $\Mat_r(\K)$
satisfying conditions (i) to (iii) for $\calV$ in the definition of property $(\calP_r)$.
In particular, $\calV$ contains $I_r \oplus 0$: using Lemma \ref{Flanderslemma}, we see that every matrix of
$\calV$ splits up as
$$M=\begin{bmatrix}
K(M) & C(M) \\
L(M) & [0]_{(n-r) \times (p-r)}
\end{bmatrix},$$
where $K(M)$, $L(M)$ and $C(M)$ are $r \times r$, $(n-r) \times r$ and $r \times (p-r)$ matrices, respectively.
If $L$ (respectively, $C$) vanishes everywhere on $\calV$, we have $\calV \subset \calR(r,0)$ (respectively, $\calV \subset \calR(0,r)$)
and we are done. For the rest of the proof, we shall assume that neither $L$ nor $C$ vanish everywhere on $\calV$.

Note that the space $\calV$ is spanned by its matrices $M$ satisfying $L(M) \neq 0$ and $C(M) \neq 0$.
Indeed, as $r \geq 2$, $\K$ has at least three elements, and hence $\calV$ is not included in the union of three of its proper linear subspaces; in particular, $\calV \setminus (\Ker L \cup \Ker C)$ is not included in a proper linear subspace of $\calV$.

Let $M_0 \in \calV$ be such that $L(M_0)\neq 0$ and $C(M_0) \neq 0$, and write $N_0=K(M_0)$.
Identity \eqref{comatrixformula} yields that
$$\forall N \in \calW, \; L(M_0)\widetilde{(N_0+N)}C(M_0)=0.$$
Fixing $N \in \calW$, we see that the entries of $L(M_0)\widetilde{(tN_0+sN)}C(M_0)$
are homogeneous polynomials of degree $r-1$ in the variable $(t,s)$, and they vanish at every
point of the projective line $\mathbb{P}(\K^2)$ with the possible exception of the point with homogeneous coordinates $[1,0]$.
As $\mathbb{P}(\K^2)$ has more than $r+1$ points, one deduces that all those polynomials are zero, and hence
\begin{equation}\label{superrefinedcomatrix}
\forall N \in \K N_0 + \calW, \; L(M_0)\widetilde{N}C(M_0)=0.
\end{equation}
As $L(M_0) \neq 0$ and $C(M_0) \neq 0$, one deduces that there is a non-zero vector $x \in \K^r$
such that $\Vect \{\widetilde{N}x \mid N \in \K N_0+\calW\}$ is a proper subspace of $\K^r$.

Thus, we find matrices $Q_1$ and $Q_2$ of $\GL_r(\K)$ such that, for every
$N \in \K N_0+\calW$, the upper-left $(r-1) \times (r-1)$-submatrix of $Q_1NQ_2$ is singular.
Splitting $N=Q_1^{-1} \begin{bmatrix}
H(N) & [?]_{(r-1) \times 1} \\
[?]_{1 \times (r-1)} & ?
\end{bmatrix} Q_2^{-1}$ for all such $N$, we deduce that $H(\K N_0+\calW)$ is an $\overline{r-2}$-subspace of $\Mat_{r-1}(\K)$.
However, the rank theorem yields that
$$\dim H(\K N_0+\calW) \geq \dim \calW-2r+1 \geq (r-1)(r-2)-(r-2)+1.$$
Using Theorem \ref{square}, we deduce that:
\begin{enumerate}[(i)]
\item Either $H(\K N_0+\calW)$ is equivalent to a subspace of $\calR(0,r-2)$;
\item Or $H(\K N_0+\calW)$ is equivalent to a subspace of $\calR(r-2,0)$;
\item Or $r > 3$ and $H(\K N_0+\calW)$ is equivalent to a subspace of $\calR(1,r-3)$;
\item Or $r > 3$ and $H(\K N_0+\calW)$ is equivalent to a subspace of $\calR(r-3,1)$.
\end{enumerate}
Replacing $\calV$ with its transpose if necessary, we see that only cases (i) and (iii) need to be tackled:
indeed, one notes that $\calV^T \subset \Mat_{p,n}(\K)$ has property $(\calP_r)$ and
that if the desired conclusion holds for $\calV^T$, then it holds for $\calV$ as well.

In any of cases (i) and (iii) above, we obtain proper $i$-dimensional subspaces $P$ and $P'$ of $\K^r$
such that every matrix of $\K N_0+\calW$ maps $P$ into $P'$, more precisely, we can take $i=1$ in case (i), and $i=2$ in case (iii).
As $\calV$ contains $I_r \oplus 0$, one notes that $\K N_0+\calW$ contains $I_r$, to the effect that $P'=P$.
Therefore, we have found that $P$ is a non-trivial linear subspace of $\K^r$ which is stable under all the matrices of
$\K N_0+\calW$. By Lemma \ref{uniquestablesubspace}, it is the sole non-trivial linear subspace which is stable under all the matrices
of $\calW$. Thus, we may choose a matrix $Q \in \GL_r(\K)$ which maps $P$ onto $\K^i \times \{0\}$
(and make such a choice with no regard to $M_0$)
and replace $\calV$ with $(Q \oplus I_{n-r}) \calV (Q \oplus I_{p-r})^{-1}$ to reduce the situation further to the point where
one of the following situations holds: \\

\noindent \textbf{Case 1.} Either $K(M) \in \Mat_1(\K) \vee \Mat_{r-1}(\K)$
for every $M \in \calV$ satisfying $L(M) \neq 0$ and $C(M) \neq 0$.
As those matrices span $\calV$, one deduces that $K(\calV) \subset \Mat_1(\K) \vee \Mat_{r-1}(\K)$.
\vskip 2mm
\noindent
\textbf{Case 2.} Or $r>3$ and $K(M) \in \Mat_2(\K) \vee \Mat_{r-2}(\K)$
for every $M \in \calV$ satisfying $L(M) \neq 0$ and $C(M) \neq 0$.
Again, this implies $K(\calV) \subset \Mat_2(\K) \vee \Mat_{r-2}(\K)$.

\vskip 2mm
Now, let us discuss those two cases separately. Assume that Case 1 holds.
Again, let $M_0 \in \calV$ be such that $L(M_0) \neq 0$ and $C(M_0) \neq 0$.
Using \eqref{superrefinedcomatrix}, one finds
$$\forall N \in \calW \cap \GL_r(\K), \; L(M_0)N^{-1}C(M_0)=0.$$
Using Lemma \ref{comatrix1lemma}, one deduces that
$$C(M_0)=\begin{bmatrix}
R_1 \\
[0]_{(r-1) \times (p-r)}
\end{bmatrix} \quad \text{for some $R_1\in \Mat_{1,p-r}(\K) \setminus \{0\}$.}$$
Denoting by $S_1$ the first column of $L(M_0)$,
identity $L(M_0)C(M_0)=0$ from Lemma \ref{Flanderslemma} reads $S_1R_1=0$, and hence $S_1=0$.
Thus, we have shown that every matrix $M \in \calV$ such that $L(M) \neq 0$ and $C(M) \neq 0$ has the form
$$\begin{bmatrix}
? & [?]_{1 \times (r-1)} & [?]_{1 \times (p-r)} \\
[0]_{(n-1) \times 1} & [?]_{(n-1)\times (r-1)} & [0]_{(n-1) \times (p-r)}
\end{bmatrix}.$$
As those matrices span $\calV$, the result holds for every matrix of $\calV$.
Permuting columns shows that $\calV$ is equivalent to a subspace of $\calR(1,r-1)$.

\vskip 2mm
We complete the proof by examining Case 2.
As $\dim K(\calV) \geq \dim \calW \geq r^2-2r+4$ and $K(\calV) \subset \Mat_2(\K) \vee \Mat_{r-2}(\K)$, we find
$K(\calV)=\calW=\Mat_2(\K) \vee \Mat_{r-2}(\K)$.
Again, let us fix $M_0 \in \calV$ such that $L(M_0) \neq 0$ and $C(M_0) \neq 0$.
Identity \eqref{superrefinedcomatrix} yields:
$$\forall N \in \calW \cap \GL_{r-1}(\K), \; L(M_0) N^{-1} C(M_0)=0.$$
With Lemma \ref{comatrix2lemma}, one deduces that $\im C(M_0) \subset  \K^2 \times \{0\}$.
Noting that for the permutation matrix $P$ associated with $i \mapsto r+1-i$, one has $\calW=P \calW^T P^{-1}$, one
deduces, with the same line of reasoning, that $\im L(M_0)^T \subset \{0\} \times \K^{r-2}$.
It follows that $M_0$ has the form
$$M_0=\begin{bmatrix}
[?]_{2 \times 2} & [?]_{2 \times (r-2)} & [?]_{2 \times (p-r)} \\
[0]_{(n-2) \times 2} & [?]_{(n-2) \times (r-2)} & [0]_{(n-2) \times (p-r)}
\end{bmatrix}.$$
As such matrices span $\calV$, we may use a permutation of columns to find that $\calV$
is equivalent to a linear subspace of $\calR(2,r-2)$.

This completes the proof of Proposition \ref{superRreduced}.

\subsection{Proof of the second classification theorem}

Here, we prove Theorem \ref{class2theorem}.
Let $n$, $p$ and $r$ be positive integers such that $n \geq p >r \geq 2$.
Let $\calV$ be an $\overline{r}$-subspace of $\Mat_{n,p}(\K)$ with $\dim \calV \geq nr-2r+4+2(p-n)$.

We can also assume that $\calV$ contains a rank $r$ matrix, for if it does not, then
$\calV$ is an $\overline{r-1}$ subspace of $\Mat_{n,p}(\K)$ of dimension greater than or equal to
$nr-2r+4+2(p-n) \geq n(r-1)-(r-1)+(p-n)+2$, and then Theorems \ref{square} and \ref{rectangular}
yield that $\calV$ is equivalent to a subspace of $\calR(0,r-1)$ or $\calR(r-1,0)$.

In the rest of the proof, we shall assume that $\calV$ contains a rank $r$ matrix.
Then, no generality is lost in assuming that $\calV$ contains $I_r \oplus 0$. Lemma \ref{Flanderslemma}
entails that every matrix $M \in \calV$ splits up as
$$M=\begin{bmatrix}
K(M) & C(M) \\
L(M) & [0]_{(n-r) \times (p-r)}
\end{bmatrix}$$
where $K(M)$, $L(M)$ and $C(M)$ are $r \times r$, $(n-r) \times r$ and $r \times (p-r)$ matrices, respectively.
\vskip 2mm
Formula \eqref{comatrixformula} yields
\begin{equation}
\label{refinedcomatrixformula}
\forall M \in \calV, \; L(M) \widetilde{K(M)} C(M)=0.
\end{equation}
Let us define $\calV'$ as the linear subspace of $\calV$ consisting of its matrices $M$ satisfying $C(M)=0$, and
$\calV''$ as the linear subspace of $\calV'$ consisting of its matrices $M$ satisfying $L(M)=0$.
In other words, $\calV''$ is the subspace of all matrices of $\calV$ which have the form
$$M=\begin{bmatrix}
K(M) & [0]_{r \times (p-r)} \\
[0]_{(n-r) \times r} & [0]_{(n-r) \times (p-r)}
\end{bmatrix}.$$
Polarizing the quadratic formula $\forall M \in \calV, \; L(M)C(M)=0$ yields
\begin{equation}\label{orthoequation}
\forall M \in \calV, \; \forall N \in \calV', \; L(N) C(M)=0.
\end{equation}
Thus, setting
$$G:=\sum_{M \in \calV} \im C(M) \quad \text{and} \quad q:=\dim G,$$
one deduces from \eqref{orthoequation} that
$$\dim L(\calV') \leq (n-r) \times (r-q) \quad \text{and} \quad \dim C(\calV) \leq q(p-r).$$
Setting
$$\calW:=K(\calV'') \subset \Mat_r(\K),$$
one deduces from the rank theorem that
$$\dim \calW=\dim \calV-\dim L(\calV')-\dim C(\calV) \geq r^2-2r+4+(q-2)(n-p).$$
Notice also that $I_r \in \calW$.
If $q \geq 2$ or $n=p$, then $\calV$ has property $(\calP_r)$, and the conclusion ensues by using Proposition \ref{superRreduced}.
If $q=0$, then $C(\calV)=\{0\}$ and hence $\calV \subset \calR(0,r)$.

Now, we assume that $q=1$ and $n>p$. Then, $G$ is a $1$-dimensional subspace of $\K^r$; choosing $Q \in \GL_r(\K)$ such that
$QG=\K \times \{0\}$ and replacing $\calV$ with $(Q \oplus 0_{n-r}) \calV (Q^{-1} \oplus 0_{p-r})$, we see that no generality is lost
in assuming that $G=\K \times \{0\}$. Then, every matrix of $\calV$ splits up as
$$M=\begin{bmatrix}
[?]_{1 \times r} & R(M) \\
S(M) & [0]_{(n-1) \times (p-r)}
\end{bmatrix}$$
where $R(M) \in \Mat_{1,p-r}(\K)$ and $S(M) \in \Mat_{n-1,r}(\K)$.
As $q=1$, we have $\rk R(\calV)=1$, and hence Lemma \ref{ranksplittinglemma} shows that $\rk S(\calV) \leq r-1$.
However, the rank theorem yields
$$\dim S(\calV) \geq nr-2r+4+2(p-n)-p > (n-1)(r-1)-(r-1)+1+(r-(n-1)),$$
the last inequality stemming from $p>r$.
As $n-1\geq p>r$, Theorem \ref{rectangular} yields a non-zero vector $x$ of $\K^r$ such that
every matrix of $\calS(\calV)$ vanishes at $x$.
However, as $I_r \oplus 0$ belongs to $\calV$, we see that $\calS(\calV)$ contains
$\begin{bmatrix}
[0]_{(r-1) \times 1} & I_{r-1} \\
[0]_{(n-r) \times 1} & [0]_{(n-r) \times (r-1)}
\end{bmatrix}$.
This shows that $x \in \K \times \{0\}$, which, in turn, yields that every matrix of $\calV$ has the form
$$\begin{bmatrix}
? & [?]_{1 \times (r-1)} & [?]_{1 \times (p-r)} \\
[0]_{(n-1) \times 1} & [?]_{(n-1) \times (r-1)} & [0]_{(n-1) \times (p-r)}
\end{bmatrix}.$$
Permuting columns, one concludes that $\calV$ is equivalent to a linear subspace of $\calR(1,r-1)$.

This completes the proof of Theorem \ref{class2theorem}.

\subsection{Final comments}

A key point in the above proof is the way we use the Atkinson-Lloyd classification theorem
to recover crucial information on the structure of $K(\calV'')$. This suggests an inductive strategy to obtain
classification theorems for smaller dimensions, at least for square matrices.
If, for some positive integer $i$, one has access to a classification theorem for
 $\overline{r}$-subspaces for dimensions that are greater than or equal to $nr-i(r-i)$, for all possible values of $r$, then those
 results help us understand the structure of the $K(\calV'')$ space, and one can possibly
use this insight to recover the structure of $\overline{r}$-spaces of square matrices with dimension greater than or equal to
$nr-(i+1)(r-(i+1))$.

\end{document}